\documentclass[11pt]{amsart}

\topmargin 0mm \evensidemargin 15mm \oddsidemargin 15mm \textwidth
140mm \textheight 230mm

\theoremstyle{plain}
\newtheorem{theorem}                {Theorem}      [section]
\newtheorem*{theorem*}                {Theorem \ref{thm:appl}}
\newtheorem{proposition}  [theorem]  {Proposition}
\newtheorem{corollary}    [theorem]  {Corollary}
\newtheorem{lemma}        [theorem]  {Lemma}

\theoremstyle{definition}

\newtheorem{remark}       [theorem]  {Remark}

\numberwithin{equation}{section}

\DeclareMathOperator{\trace}{trace}
\DeclareMathOperator{\grad}{grad}
\DeclareMathOperator{\Span}{span}
\DeclareMathOperator{\Imag}{Im}
\DeclareMathOperator{\Div}{div}

\begin{document}

\title[Biconservative surfaces in the $4$-dimensional Euclidean sphere]
{Biconservative surfaces in the $4$-dimensional Euclidean sphere}

\author{Simona Nistor, Cezar Oniciuc, Nurett\.{I}n Cenk Turgay, R\"uya Ye\u g\.{I}n \c{S}en}

%\author{Nurettin~Cenk-Turgay}
%\author{R\"uya~Ye\u gin~\c Sen}

\address{Faculty of Mathematics, Al. I. Cuza University of Iasi,
Blvd. Carol I, 11 \\ 700506 Iasi, Romania} \email{nistor.simona@ymail.com}

\address{Faculty of Mathematics, Al. I. Cuza University of Iasi,
Blvd. Carol I, 11 \\ 700506 Iasi, Romania} \email{oniciucc@uaic.ro}

\address{\.Istanbul Technical University, Faculty of Science and Letters, Department of Mathematics, 34469 Maslak, \.Istanbul, Türkiye}
\email{turgayn@itu.edu.tr}

\address{\.Istanbul Medeniyet University, Faculty of Engineering and Natural Sciences, Department of Mathematics, 34700 \"Usk\"udar, \.Istanbul, Türkiye} \email{ruya.yegin@medeniyet.edu.tr}

\thanks{The first author was supported by a grant of the Romanian Ministry of Research and Innovation, CNCS – UEFISCDI, project number PN-III-P1-1.1-PD-2019-0429, within PNCDI III. The third and fourth named authors were supported by a 3501 project of the Scientific and Technological Research Council of T\"urkiye (T\"UB\.ITAK) (Project Number:121F253).}

\begin{abstract}
In this paper, we study biconservative surfaces with parallel normalized mean curvature vector field ($PNMC$) in the $4$-dimensional unit Euclidean sphere $\mathbb{S}^4$. First, we study the existence and uniqueness of such surfaces. We obtain that there exists a $2$-parameter family of non-isometric abstract surfaces that admit a (unique) $PNMC$ biconservative immersion in $\mathbb{S}^4$. Then, we obtain the local parametrization of these surfaces in the $5$-dimensional Euclidean space $\mathbb{E}^5$.	
\end{abstract}

\keywords{Biconservative surfaces; Riemannian space-forms; parallel normalized mean curvature vector field}

\subjclass[2010]{53C42 (Primary); 53B25.}

\maketitle

\section{Introduction}

A map $\varphi:\left(M^m,g\right)\to\left(N^n,h\right)$ between two Riemannian manifolds is said to be \textit{biharmonic} if it is a critical point of the \textit{bienergy} functional which is defined by  
$$
E_2:C^\infty\left(M^m,N^n\right)\to\mathbb R, \qquad E_2(\varphi)=\int_M |\tau (\varphi )|^2\ v_g,
$$
where $v_g$ denotes the volume element of $g$,
$$
\tau(\varphi ):=\trace\nabla \varphi_\ast
$$
is the tension field of $\varphi$ and $\varphi_\ast$ indicates the differential of $\varphi$. In \cite{Ji2}, the first and second variation of $E_2$ were obtained by Jiang, and he proved that $\varphi $ is biharmonic if and only if the associated Euler-Lagrange equation 
$$
\tau_2(\varphi):=-\Delta\tau(\varphi)-\trace R^N(\varphi_\ast,\tau(\varphi ))\varphi_\ast=0
$$
is satisfied, where $\tau_2$ is  the bitension field and $R^N$ denotes the curvature tensor field of $N^n$. 

On the other hand, in \cite{CMOP} Caddeo \textit{et al.} defined a \textit{biconservative} immersion as an isometric immersion satisfying the condition
$$
\langle\tau_2(\varphi ),\varphi_\ast \rangle=0
$$
(see Section \ref{preliminaries} for details). Obviously, biharmonic immersions are always biconservative. Thus, the biconservative immersions form a larger family of submanifolds including the biharmonic ones. In a space form, any hypersurface with constant mean curvature function ($CMC$) and any submanifold with parallel mean curvature vector field ($PMC$) are trivially biconservative. Therefore, in space forms we will exclude these cases.

In \cite{Hasanis-Vlachos}, Hasanis and Vlachos classified all biconservative hypersurfaces in $\mathbb{E}^4$. They called such hypersurfaces as $H$-hypersurfaces. After that, biconservative hypersurfaces and submanifolds in space forms have been studied by several geometers (see \cite{CMOP,DA2018,FNC2016,Yegin2022}, etc.). For example, in \cite{CMOP}, Caddeo \textit{et al.} classified, from an extrinsic local point of view, all non-$CMC$ biconservative surfaces in the three-dimensional space forms $N^3(c)$ and proved that for any $c$ there exists a one-parameter family of such surfaces (see also \cite{F15, Hasanis-Vlachos}). From a global point of view, this study was continued in \cite{MP2021, NO2020}. Furthermore, $PMC$ biconservative submanifolds in the product spaces $\mathbb{S}^n\times\mathbb{R}$ and $\mathbb{H}^n\times\mathbb{R}$ were studied in \cite{FNP2015,MFU2019}. 

The next step in the study of biconservative submanifolds in space forms is the study of non-$PMC$ biconservative surfaces in $N^4(c)$. This is also motivated by the paper \cite{MOR2016JGA}, where $CMC$ biconservative surface in $N^4(c)$ were studied, and by \cite{YeginTurgay2018}, where non-$CMC$ but $PNMC$ biconservative surfaces in $\mathbb{E}^4$ were classified. The study of $CMC$ biconservative surfaces in $N^4(c)$ proved to be rigid because these surfaces have to be either $PMC$, when $c\neq 0$, or certain cylinders, when $c=0$. On the other hand, the study of non-$CMC$ biconservative surfaces in $N^4(c)$ seems to be very complex and a full classification is hard to achieve in this case. Therefore, in \cite{YeginTurgay2018}, the authors added the natural hypothesis of being $PNMC$ and proved that the family of such surfaces is quite large.

In our paper, we continue the work in \cite{YeginTurgay2018} and investigate intrinsic and extrinsic properties of (non-$CMC$) $PNMC$ biconservative surfaces in $\mathbb{S}^4$. In Section \ref{preliminaries}, we present basic definitions and results that we are going to use. In Section \ref{sec-intrinsic}, we study intrinsic properties of  $PNMC$ biconservative surfaces in $\mathbb{S}^4$. In Theorem \ref{thm:uniquenessOfImmersions}, we prove that if an abstract surface admits two $PNMC$ biconservative immersions in $\mathbb{S}^4$, then these immersions must be congruent. Furthermore, in Theorem \ref{thm:intrinsicCharacterization}, we obtain a necessary and sufficient intrinsic condition in order for an abstract surface to admit a (unique) $PNMC$ biconservative immersion in $\mathbb{S}^4$. Essentially, that condition says that the level curves of the Gaussian curvature of the surface are circles with a certain (constant) curvature. The existence of such abstract surfaces is ensured by Theorem \ref{thm-existence} and Proposition \ref{other-parameters}. In Theorem \ref{thm-existence}, starting with a solution of a certain $ODE$, we construct locally the metric which satisfies the above condition. Alternatively, using appropriate coordinates with geometrical meaning, a more explicit form of the metric is given in Proposition \ref{other-parameters}. From relation \eqref{appropriate-coordinates}, we can see that the metric depends of two parameters. In Section \ref{sec-extrinsic}, we study extrinsic properties of $PNMC$ biconservative surfaces in $\mathbb{S}^4$. In particular, we obtain a local parametrization of our surfaces (see Theorem \ref{theorem-parametrization}).

\textbf{Conventions and notations.}

In this paper, all Riemannian metrics are indicated, in general, by the same symbol $\langle\cdot,\cdot\rangle$. Sometimes, when there is no confusion, we will omit to indicate the metric. We assume that the manifolds are connected and oriented, and we use the following sign conventions for the rough Laplacian acting on sections of the pull-back bundle $\varphi^{-1}\left(TN^n\right)$ and for the curvature tensor field, respectively:
$$
\Delta^{\varphi}=-\trace\left(\nabla^{\varphi}\nabla^{\varphi}-\nabla^{\varphi}_{\nabla}\right)
$$
and
$$
R(X,Y)Z=[\nabla_X,\nabla_Y]Z-\nabla_{[X,Y]}Z,
$$
where $\nabla$ is the Levi-Civita connection on $M$. 

In order to avoid trivial cases for our study of non-$CMC$ biconservative surfaces $M^2$ in $\mathbb{S}^4$, we will assume that the mean curvature function of the surface is positive, its gradient is different to zero at any point, and $M^2$ is completely contained in $\mathbb{S}^4$, i.e., for any open subset of $M^2$ there exists no great hypersphere $\mathbb{S}^3$ of $\mathbb{S}^4$ such that it lies in $\mathbb{S}^3$.

\section{Preliminaries}\label{preliminaries}
Let $\varphi:\left(M^m,g\right)\to \left(N^n,h\right)$ be an isometric immersion or, simply, let $M^{m}$ be a submanifold in $N^{n}$. We have the standard decomposition of $\varphi^{-1}\left(TN^n\right)$ into the direct sum of the tangent bundle $TM^m\equiv \varphi_\ast \left(TM^m\right)$ of $M^m$ and the normal bundle 
$$
NM^m=\displaystyle{\bigcup_{p\in M} \varphi_\ast \left(T_{p}M^m\right)^\perp}
$$ 
of $M^m$ in $N^n$. 

Locally, we can identify $M^m$ with its image by $\varphi$, $X$ with $\varphi_\ast (X)$ and $\nabla^\varphi_X \varphi_\ast(Y)$ with $\nabla^N_X Y$, where $\nabla^N$ is the Levi-Civita connection on $N^n$. Now, we recall the Gauss and the Weingarten formulas
\begin{equation*}
	\nabla^N_X Y=\nabla_X Y+B(X,Y),
\end{equation*}
and
\begin{equation*}
	\nabla^N_X \eta=-A_\eta(X)+\nabla^\perp_X\eta,
\end{equation*}
where $B\in C\left(\odot^2 T^\ast M^m\otimes NM^m\right)$ is called the second fundamental form of $M^m$ in $N^n$, $A_\eta\in C\left(T^\ast M^m\otimes TM^m\right)$ is the shape operator of $M^m$ in $N^n$ in the direction $\eta$, and $\nabla^\perp$ is the induced connection in the normal bundle.

In the particular cases when $N^n=\mathbb{S}^4$ or $N^n=\mathbb{E}^5$, we will denote the corresponding Levi-Civita connections by $\tilde{\nabla}$ or $\hat{\nabla}$, respectively.

The mean curvature vector field of $M^m$ in $N^n$ is defined by $H=(\trace B)/m\in C\left(NM^m\right)$, where the $\trace$ is considered with respect to the metric $g$. The mean curvature function is defined by $f=|H|$.

In this paper, we will assume that $H\neq 0$ at any point, so $f$ is a smooth positive function on $M^m$. We will denote $E_{m+1}=H/f$ and $A_{m+1}=A_{E_{m+1}}$. Also, a local orthonormal frame field in the normal bundle $NM^m$ of $M^m$ in $N^n$ will be indicated by $\left\{E_{m+1},\ldots, E_n\right\}$. 
 
We recall now the fundamental equations of submanifolds, i.e.,  the \textit{Gauss, Codazzi and Ricci equations}, that we will use later in this paper:

\begin{equation}\label{Gauss-equation}
\langle R^N(X,Y)Z,W\rangle=\langle R(X,Y)Z,W\rangle-\langle B(X,W),B(Y,Z)\rangle+\langle B(Y,W),B(X,Z)\rangle,
\end{equation}

\begin{equation}\label{Codazzi-equation}
	\left(\nabla_X A_\eta\right)(Y)-\left(\nabla_Y A_\eta\right)(X)=A_{\nabla_X^\perp \eta}(Y)-A_{\nabla_Y^\perp \eta}(X)-\left(R^N(X,Y)\eta\right)^\top,
\end{equation}
and 
\begin{equation}\label{Ricci-equation}
	\left(R^N(X,Y)\eta\right)^\perp=R^\perp(X,Y)\eta+B\left(A_\eta(X),Y\right)-B\left(X,A_\eta(Y)\right),
\end{equation} 
where $X, Y, Z, W\in C\left(TM^m\right)$ and $\eta\in C\left(NM^m\right)$.

We also recall the following characterization formulas of biharmonic submanifolds in Euclidean spheres.  

\begin{theorem}[\cite{B-Y. Chen, O2002}] Let $M^m$ be a submanifold in $\mathbb{S}^n$. Then $M^m$ is biharmonic if and only if
\begin{equation}\label{biharmonic-system}
\left\{
\begin{array}{l}
\Delta^\perp H+\trace B\left(\cdot,A_H (\cdot)\right)-mH=0 \\
4\trace A_{\nabla^\perp_{\cdot}H}\left(\cdot\right)+m\grad |H|^2=0
\end{array}
\right. .
\end{equation}
\end{theorem}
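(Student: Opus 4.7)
The plan is to apply the biharmonicity condition $\tau_2(\varphi)=0$ to the isometric immersion $\varphi$ and split the resulting identity into its normal and tangential parts. Since $\tau(\varphi)=mH$ for an isometric immersion, biharmonicity reads
$$
-m\,\Delta^{\varphi}H - m\,\trace R^{\mathbb{S}^n}(\varphi_\ast,H)\varphi_\ast = 0.
$$

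I would first dispatch the curvature term. Using that $\mathbb{S}^n$ has constant sectional curvature $1$, so $R^{\mathbb{S}^n}(X,Y)Z=\langle Y,Z\rangle X-\langle X,Z\rangle Y$, and that $H$ is normal, a trace computation in a local orthonormal tangent frame $\{e_i\}_{i=1}^m$ immediately gives $\trace R^{\mathbb{S}^n}(\varphi_\ast,H)\varphi_\ast=-mH$. Hence the biharmonic condition reduces to the purely normal-looking identity $\Delta^{\varphi}H = mH$, whose left hand side still mixes tangential and normal data.

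The core of the argument is to decompose $\Delta^{\varphi}H$ along $TM^m\oplus NM^m$. Using the Weingarten formula $\nabla^{\varphi}_X H=-A_H(X)+\nabla^\perp_X H$ and iterating with the paper's sign convention, a direct expansion produces
$$
\Delta^{\varphi}H = \bigl(\Delta^\perp H+\trace B(\cdot,A_H(\cdot))\bigr) + \bigl(\trace(\nabla A_H)+\trace A_{\nabla^\perp_\cdot H}(\cdot)\bigr),
$$
where $\trace(\nabla A_H):=\sum_i(\nabla_{e_i}A_H)(e_i)$; the first parenthesis is normal and the second is tangential. To simplify the tangential trace further, I would invoke the Codazzi equation \eqref{Codazzi-equation} with $\eta=H$: since $H$ is normal and $\mathbb{S}^n$ has constant curvature, $(R^{\mathbb{S}^n}(X,Y)H)^\top=0$, so one can swap the tangent arguments in $(\nabla_{e_i}A_H)(e_j)$ modulo terms in $\nabla^\perp_{\cdot} H$. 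Combined with the identities $\trace A_H=m|H|^2$ and $\trace A_{\nabla^\perp_{e_j}H}=\tfrac{m}{2}\,e_j(|H|^2)$, a short calculation yields
$$
\trace(\nabla A_H) = \tfrac{m}{2}\grad|H|^2 + \trace A_{\nabla^\perp_\cdot H}(\cdot).
$$

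Substituting back into $\Delta^{\varphi}H=mH$ and reading off normal and tangential components produces exactly the two equations of \eqref{biharmonic-system} (after multiplying the tangential identity by $2$ to clear the factor $\tfrac{m}{2}$). The main obstacle is the Codazzi manipulation that reshapes $\trace(\nabla A_H)$ into a $\grad|H|^2$ term, where the symmetry of $A_H$ and the antisymmetry of the connection coefficients must be used carefully; everything else reduces to routine unfolding of the Gauss--Weingarten formulas.
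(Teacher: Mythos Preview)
Your argument is correct and is precisely the standard one: compute the curvature term using the explicit curvature tensor of $\mathbb{S}^n$, expand $\Delta^{\varphi}H$ via Gauss--Weingarten, and use Codazzi together with $\trace A_H=m|H|^2$ and $\trace A_{\nabla^\perp_{e_j}H}=\tfrac{m}{2}e_j(|H|^2)$ to rewrite the tangential piece. Note, however, that the paper does not supply its own proof of this theorem: it is quoted in the Preliminaries as a known result from the cited references, so there is no ``paper's proof'' to compare against. Your write-up is exactly the computation carried out in those references.
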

For more information concerning biharmonic submanifolds see, for example, \cite{FO2022, Ou-Chen}.

Now, if we fix a map $\varphi$ and let the domain metric vary, we obtain a new functional on the set $\mathcal{G}$ of all Riemannian metrics on $M^{m}$ defined by
$$
\mathcal{F}_{2}:\mathcal{G}\to \mathbb{R}, \quad \mathcal{F}_{2}(g)=E_{2}(\varphi).
$$
Critical points of this functional are characterized by the vanishing of the stress-energy tensor of the bienergy (see \cite{LMO}). This tensor field, denoted by $S_{2}$, was introduced in \cite{Jiang87} 
as
\begin{eqnarray*}
	S_{2}(X,Y)&=&\frac{1}{2}\vert \tau (\varphi)\vert ^{2}\langle X,Y \rangle +\langle \varphi_\ast, \nabla \tau (\varphi) \rangle \langle X, Y \rangle-\langle \varphi_\ast (X),\nabla_{Y} \tau (\varphi)\rangle
	\\
	&\ & -\langle \varphi_\ast (Y),\nabla_{X} \tau (\varphi)\rangle,
\end{eqnarray*}
and it satisfies
$$
\Div S_{2}=\langle \tau_{2}(\varphi), \varphi_\ast\rangle.
$$
We note that, for isometric immersions, $(\Div S_{2})^{\sharp} =-\tau_{2}(\varphi)^{\top}$, where $\tau_{2}(\varphi)^{\top}$ is the tangent part of the bitension field.

A submanifold $\varphi:M^{m} \to N^{n}$ of a Riemannian manifold $N^{n}$ is called \textit{biconservative} if $\Div S_{2}=0$.

\section{An intrinsic approach}\label{sec-intrinsic}

We will study non-$CMC$ biconservative surfaces with parallel normalized vector field in Euclidean sphere $\mathbb{S}^4$, i.e., $PNMC$ biconservative surfaces in $\mathbb{S}^4$.

We recall that (see \cite{FLO2021}) $M^2$ is a $PNMC$ biconservative surface in $\mathbb{S}^4$ if and only if
\begin{equation}\label{eq:PNMC-biconservative}
A_{3}(\grad f)=-f\grad f.
\end{equation}

Let us consider
$$
E_1=\frac{\grad f}{|\grad f|} \qquad \text{and} \qquad E_3=\frac{H}{f}.
$$

Because of orientation, we can consider positively oriented global orthonormal frame fields $\left\{E_1,E_2\right\}$ in the tangent bundle $TM^2$ and $\left\{E_3,E_4\right\}$ in the normal bundle $NM^2$.

Clearly, $E_2f=0$. Denoting $A_3=A_{E_3}$ and $A_4=A_{E_4}$, we have the following direct properties of our surfaces, which follow from \eqref{Gauss-equation}, \eqref{Codazzi-equation} and \eqref{Ricci-equation}.

\begin{theorem}\label{thm:fundamentalProperties}
Let $\varphi:\left(M^2,g\right)\to\mathbb{S}^4$ be a $PNMC$ biconservative immersion. Then, the following hold:
\begin{itemize}
\item [(i)] the Levi-Civita connection $\nabla$ of $M^2$ and the normal connection $\nabla^\perp$ of $M^2$ in $\mathbb{S}^4$ are given by
\begin{equation}\label{Levi-Civita-connection-f}
\nabla_{E_1}E_1=\nabla_{E_1}E_2=0, \quad \nabla_{E_2}E_1=-\frac{3}{4}\frac{E_1 f}{f}E_2, \quad \nabla_{E_2}E_2=\frac{3}{4}\frac{E_1 f}{f}E_1
\end{equation}
and
\begin{equation*}\label{normal-connection-f}
\nabla^\perp E_3=0, \qquad \nabla^\perp E_4=0;
\end{equation*}

\item [(ii)] the shape operators corresponding to $E_3$ and $E_4$ are given, with respect to $\left\{E_1, E_2\right\}$, by the matrices
\begin{equation*}\label{shape-operators-A3-A4-f}
A_3=\left(
\begin{array}{cc}
-f & 0 \\
 0 & 3f
\end{array}
\right),\quad
A_4=\left(
\begin{array}{cc}
cf^{3/2} & 0 \\
0 & -cf^{3/2}
\end{array}
\right),
\end{equation*}
where $c$ is a non-zero real constant;

\item [(iii)] the Gaussian curvature $K$ and the mean curvature function $f$ are related by
\begin{equation}\label{relation-K-f}
K=1-3f^2-c^2f^3,
\end{equation}
thus $1-K>0$ on $M^2$;

\item [(iv)] the mean curvature function $f$ satisfies
\begin{equation}\label{second-order-invariant-f}
f\Delta f+\left|\grad f\right|^2+\frac{4}{3}f^2-4f^4-\frac{4}{3}c^2f^5=0;
\end{equation}

\item [(v)] around any point of $M^2$ there exists a positively oriented local chart $X^f=X^f(u,v)$ such that
$$
\left(f\circ X^f\right)(u,v)=f(u,v)=f(u)
$$
and $f$ satisfies the following second order $ODE$
\begin{equation}\label{second-order-chart-f}
 f''f-\frac{7}{4}\left(f'\right)^2-\frac{4}{3}f^2+4f^4+\frac{4}{3}c^2f^5=0
\end{equation}
and the condition $f'>0$. The first integral of the above second order ODE is
\begin{equation}\label{first-integral-f}
\left(f'\right)^2-2C^2f^{7/2}+\frac{16}{9}f^2+16f^4+\frac{16}{9}c^2f^5=0,
\end{equation}
where $C$ is a non-zero real constant.
\end{itemize}
\end{theorem}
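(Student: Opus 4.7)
The plan is to derive the five items as a chain of consequences of the biconservative and $PNMC$ hypotheses via the Gauss, Codazzi, and Ricci equations, in the order stated. For the shape operator $A_3$ in (ii), I would use \eqref{eq:PNMC-biconservative} together with $\grad f = (E_1 f) E_1$ and $E_1 f > 0$ to read off $A_3(E_1) = -f E_1$; self-adjointness and $\trace A_3 = 2f$ then force $A_3(E_2) = 3f E_2$. The $PNMC$ hypothesis is $\nabla^\perp E_3 = 0$, and differentiating $\langle E_3, E_4 \rangle = 0$ and $|E_4|^2 = 1$ along $\nabla^\perp$ gives $\nabla^\perp E_4 = 0$, completing the normal-connection half of (i). For $A_4$, the Ricci equation \eqref{Ricci-equation} with $\eta = E_3$ collapses (since $R^{\mathbb{S}^4}(X, Y) E_3$ vanishes for tangent $X, Y$, and $R^\perp(X, Y) E_3 = 0$) to $[A_3, A_4] = 0$; hence $A_4$ is diagonalised by $\{E_1, E_2\}$, and $\trace A_4 = 0$ yields $A_4(E_1) = a E_1$, $A_4(E_2) = -a E_2$ for some function $a$.

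To finish (i) and (ii), I would write $\nabla_{E_1} E_1 = \beta E_2$, $\nabla_{E_2} E_1 = \gamma E_2$ (the other two covariant derivatives follow by orthonormality) and substitute into the Codazzi equation \eqref{Codazzi-equation} for $\eta = E_3$ with $(X, Y) = (E_1, E_2)$. Matching $E_1$- and $E_2$-components yields $\beta = 0$ and $\gamma = -\tfrac{3}{4} \tfrac{E_1 f}{f}$, which is the first half of (i). The same Codazzi identity for $\eta = E_4$ produces $E_2 a = 0$ and $E_1 a = \tfrac{3 a}{2 f} (E_1 f)$; integration along $E_1$-orbits then gives $a = c f^{3/2}$ for a constant $c$, which must be non-zero, for otherwise $A_4 \equiv 0$ combined with $\nabla^\perp E_4 = 0$ would, by Erbacher's reduction theorem, place $M^2$ inside a totally geodesic $\mathbb{S}^3 \subset \mathbb{S}^4$, contradicting the standing hypothesis.

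For (iii), substituting the shape operators into \eqref{Gauss-equation} gives $K = 1 + \det A_3 + \det A_4 = 1 - 3 f^2 - c^2 f^3$. The key new input for (iv) and (v) is to compute $K$ a second time directly from the connection in (i): setting $\mu := \tfrac{3 E_1 f}{4 f}$, an evaluation of $\langle R(E_1, E_2) E_2, E_1 \rangle$ yields $K = E_1 \mu - \mu^2$. Equating the two expressions for $K$ produces
\[
f f_{11} - \tfrac{7}{4} f_1^2 = \tfrac{4}{3} f^2 - 4 f^4 - \tfrac{4}{3} c^2 f^5, \qquad f_1 := E_1 f, \ f_{11} := E_1 E_1 f,
\]
which, combined with $\Delta f = -f_{11} + \tfrac{3 f_1^2}{4 f}$ (a direct consequence of (i)), rearranges to \eqref{second-order-invariant-f}, giving (iv). For the chart in (v), I would flow a positively oriented $E_1$-integral curve $\alpha(u)$ under the $E_2$-flow to obtain $X^f(u, v) = \phi^{E_2}_v(\alpha(u))$; since $E_2 f = 0$, the function $f$ depends only on $u$ in this chart, and the displayed identity becomes \eqref{second-order-chart-f}. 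The first integral \eqref{first-integral-f} follows by viewing $(f')^2$ as a function of $f$: the ODE becomes first-order linear in $(f')^2$, solvable with integrating factor $f^{-7/2}$; the integration constant is necessarily positive (hence writable as $2 C^2$ with $C \neq 0$), for otherwise $(f')^2$ would be strictly negative on $\{f > 0\}$.

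The main obstacle is locating the source of the scalar ODE in (v): the Codazzi and Ricci equations alone fix only the Levi-Civita connection and the form of the shape operators, so the differential equation for $f$ must come from the compatibility between the extrinsic Gauss formula $K = 1 - 3 f^2 - c^2 f^3$ and the intrinsic formula $K = E_1 \mu - \mu^2$. Once this consistency is invoked, obtaining \eqref{first-integral-f} is routine integration.
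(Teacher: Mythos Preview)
Your proposal is correct and follows essentially the same route as the paper: PNMC plus biconservativity fix $A_3$, flatness of the normal bundle and Ricci force $A_4$ to be diagonal, Codazzi for $E_3$ gives the Levi-Civita connection, Codazzi for $E_4$ gives $a=cf^{3/2}$, and equating the extrinsic Gauss expression for $K$ with the intrinsic one coming from the connection produces the ODE. The only cosmetic differences are that the paper argues $c\neq 0$ directly (showing $E_4$ would be a constant vector in $\mathbb{E}^5$) rather than quoting Erbacher, computes the intrinsic $K$ via $d\omega^1_2=K\,\omega^1\wedge\omega^2$ rather than evaluating $\langle R(E_1,E_2)E_2,E_1\rangle$, and obtains the first integral by multiplying \eqref{second-order-chart-f} by $f'/f^{9/2}$ rather than phrasing it as a linear ODE for $(f')^2$; in the chart construction the paper also checks explicitly that $E_1=\grad u$ (equivalently $\sigma\equiv 1$), which is what justifies writing $E_1f=f'$ and $E_1(E_1f)=f''$ --- a point you should make explicit when fleshing out (v).
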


\begin{proof}
First, using \eqref{eq:PNMC-biconservative}, we have $A_3\left(E_1\right)=-fE_1$ and then, since $\trace A_3=2f$, we get $A_3\left(E_2\right)=3fE_2$.

As $\nabla^\perp E_3=0$, we have
$$\nabla^\perp E_4=0, \quad R^\perp(X,Y)E_3=0 \quad \text{and} \quad R^\perp(X,Y)E_4=0,
$$
for any $X,Y\in C\left(TM^2\right)$.

From the Ricci equation we obtain $B\left(E_1,E_2\right)=0$, so $\langle A_4\left(E_1\right), E_2\rangle=0$. On the other hand, since $\trace A_4=0$, we obtain that the matrix of $A_4$ with respect to $\left\{E_1, E_2\right\}$ is given by
\begin{equation*}
A_4=\left(
\begin{array}{cc}
\lambda & 0 \\
0 & -\lambda
\end{array}
\right),
\end{equation*}
for some smooth function $\lambda$ on $M^2$.

If we assume that $\lambda=0$ on $M^2$, or on an arbitrary open subset of $M^2$, we obtain that $\hat{\nabla}_X E_4=0$ for any $X\in C(T\mathbb{S}^4)$, hence $E_4$ is a constant vector field. Then, for any point $p\in M^2$, we can identify $(i\circ \varphi)(p)$ with its position vector in $\mathbb{E}^5$, where $i:\mathbb{S}^4\to \mathbb{E}^5$ is the canonical inclusion, and we have $(i\circ \varphi)(p)\perp E_4$. It follows that $(i\circ \varphi)(M)$ belongs to the hyperplane that passes through the origin and whose normal is $E_4$. Thus, $\varphi(M)\subset\mathbb{S}^3\subset\mathbb{S}^4$ and we get a contradiction. 

Therefore, $\lambda\neq 0$ at any point of an open dense subset of $M^2$. For simplicity, we will assume that $\lambda\neq 0$ at any point of $M^2$.

In order to obtain a more explicit expression of $\lambda$, we will use the Codazzi equation. More precisely, if we consider \eqref{Codazzi-equation} applied for $A_3$ we obtain the connection forms
\begin{equation}\label{connectionForms}
\omega^1_2\left(E_1\right)=0, \qquad \omega^1_2\left(E_2\right)= \frac{3}{4}\frac{E_1f}{f}.
\end{equation}
Then, from \eqref{Codazzi-equation} applied for $\eta=E_4$, we get
$$
E_1\lambda=\frac{3\lambda}{2}\frac{E_1 f}{f}, \qquad E_2\lambda=0.
$$
So,
$$
\frac{E_1\lambda}{\lambda}=\frac{3}{2}\frac{E_1f}{f},
$$
which is equivalent to
$$
E_1\left(\ln |\lambda|-\frac{3}{2}\ln f\right)=0.
$$
Moreover, as $E_2\lambda=0$ and $E_2f=0$, it follows that the function $\ln |\lambda|-3(\ln f)/2$ is constant, and therefore
$$
\lambda=cf^{3/2},
$$
where $c$ is a non-zero real constant.

We note that even if we had worked on a connected subset of the set of all points where $\lambda\neq 0$,  we would have obtain that the constant $c$ does not depend on that connected component.

Further, using \eqref{connectionForms}, it is easy to see that the Levi-Civita connection of $M^2$ is given by \eqref{Levi-Civita-connection-f}.

From the expressions of $A_3$ and $A_4$, we have
$$
B\left(E_1,E_1\right)=-fE_3+cf^{3/2}E_4, \qquad B\left(E_2,E_2\right)=3fE_3-cf^{3/2}E_4,
$$
and, thus, applying the Gauss equation, we obtain the relation between $K$ and $f$,
$$
K=1-3f^2-c^2f^3.
$$
Therefore, we proved $(i)$--$(iii)$ of the theorem. Next, we need to determine $(iv)$. In order to do this, we recall that
$$
d\omega^1_2\left(E_1,E_2\right)=K\left(\omega^1\wedge\omega^2\right)\left(E_1,E_2\right),
$$
i.e,
$$
E_1\left(\omega^1_2\left(E_2\right)\right)-E_2\left(\omega^1_2\left(E_1\right)\right)-\omega^1_2\left(\left[E_1,E_2\right]\right)=K.
$$
Using \eqref{Levi-Civita-connection-f} and \eqref{connectionForms}, the above relation can be rewritten as
$$
K=\frac{3}{16f^2}\left[4E_1\left(E_1 f\right)f-7\left(E_1f\right)^2\right].
$$
Now, replacing $K$ from \eqref{relation-K-f}, we get
\begin{equation}\label{secondDerivativefInvariant}
E_1\left(E_1f\right) f=\frac{7}{4}\left(E_1f\right)^2+\frac{4}{3}f^2-4f^4-\frac{4}{3}c^2f^5.
\end{equation}
From the expression of the Laplacian of $f$, we obtain
\begin{equation*}
\Delta f =-E_1\left(E_1f\right)+\frac{3}{4}\frac{\left(E_1f\right)^2}{f},
\end{equation*}
and therefore, \eqref{secondDerivativefInvariant} is equivalent to \eqref{second-order-invariant-f}. Consequently, we get $(iv)$.

Further, we will see that around any point of $M^2$ there exists a positively oriented local chart $X^f=X^f(u,v)$ such that
$$
\left(f\circ X^f\right)(u,v)=f(u,v)=f(u)
$$
and \eqref{second-order-invariant-f} is equivalent to a second order $ODE$.

Indeed, let $p_0\in M^2$ be an arbitrarily fixed point of $M^2$ and let $\gamma=\gamma(u)$ be an integral curve of $E_1$ with $\gamma(0)=p_0$. Let $\left\{\phi_v\right\}_{v\in\mathbb{R}}$ be the flow of $E_2$. We define the following local chart
$$
X^f(u,v)=\phi_v(\gamma(u))=\phi_{\gamma(u)}(v).
$$
We have 
\begin{align*}
&X^f(u,0)=\gamma(u),\\
&X^f_u(u,0)=\gamma'(u)=E_1(\gamma(u))=E_1(u,0), \\
&X^f_v(u,v)=\phi'_{\gamma(u)}(v)=E_2\left(\phi_{\gamma(u)}(v)\right)=E_2(u,v)
\end{align*}
for any $(u,v)$. Clearly, $\left\{X^f_u,X^f_v\right\}$ is positively oriented. 

If we write the Riemannian metric $g$ on $M^2$ in local coordinates as
$$
g=\langle \cdot, \cdot \rangle=g_{11}\ du^2+2g_{12}\ du\  dv+g_{22}\  dv^2,
$$
we get $g_{22}=\left|X^f_v\right|^2=\left|E_2\right|^2=1$, $g_{11}(u,0)=1$ and $g_{12}(u,0)=0$. Then, it is easy to see that
$$
E_1=\frac{1}{\sigma}\left(X^f_u-g_{12}\ X^f_v\right)=\sigma\ \grad u,
$$
where $\sigma=\sigma(u,v)=\sqrt{g_{11}-g_{12}^2}>0$ and $\sigma(u,0)=1$.

Let
$$
\left(f\circ X^f\right)(u,v)=f(u,v).
$$
Since $E_2f=0$, i.e., $X^f_v\ f=0$, it follows that
$$
f(u,v)=f(u,0)=f(u), \qquad \forall (u,v).
$$
As
\begin{align*}
\left[E_1,E_2\right]f &=\left(\nabla_{E_1}E_2-\nabla_{E_2}{E_1}\right)f=0\\
&=E_1\left(E_2f\right)-E_2\left(E_1f\right),
\end{align*}
it follows that $E_2\left(E_1 f\right)=0$ and, therefore,
\begin{equation}
\label{relatiesigma}
X^f_v\left(\frac{f'}{\sigma}\right)=0.
\end{equation}
Clearly,
$$
f'=E_1f=|\grad f|>0.
$$
Then, as $X^f_v\ f'=0$, from \eqref{relatiesigma} we obtain $X^f_v(\sigma)=0$, i.e.,
$$
\sigma(u,v)=\sigma(u,0)=\sigma(u)=1, \qquad \forall (u,v)
$$
and
\begin{equation}\label{E1}
E_1=X^f_u-g_{12}X^f_v=\grad u.
\end{equation}
Since $f=f(u)$, from \eqref{secondDerivativefInvariant} and \eqref{E1}, we obtain that $f$ satisfies equation \eqref{second-order-chart-f}.

Our next objective is to find the first integral of \eqref{second-order-chart-f}. First, we note that
\begin{equation}\label{eq:helpfullIntegralPrime}
\frac{1}{2}\left(\frac{\left(f'\right)^2}{f^{7/2}}\right)'=\frac{f'f''}{f^{7/2}}-\frac{7}{4}\frac{\left(f'\right)^3}{f^{9/2}}.
\end{equation}
Then, we multiply \eqref{second-order-chart-f} by $f'/f^{9/2}$ and using \eqref{eq:helpfullIntegralPrime}, we have
$$
\left(\frac{1}{2}\frac{\left(f'\right)^2}{f^{7/2}}+\frac{8}{9}c^2f^{3/2}+8\sqrt{f}+\frac{8}{9f^{3/2}}\right)'=0.
$$
Integrating the above relation and multiplying the result by $2f^{7/2}$, one obtains \eqref{first-integral-f}. Hence, we get $(v)$.
\end{proof}

The next result shows that, given an abstract surface $\left(M^2,g\right)$, if it admits a $PNMC$ biconservative immersion $\varphi$ in $\mathbb{S}^4$, then it is unique. In particular, up to the sign, the constant $c$ is unique, depends on $\left(M^2,g\right)$ and it is not an indexing constant. This phenomenon is different from the case of minimal surfaces in a $3$-dimensional space form where, given an abstract surface $\left(M^2,g\right)$ satisfying a certain intrinsic condition, there exists a one-parametric family of minimal immersions (see \cite{L70,MM2015,R95}).

\begin{theorem}\label{thm:uniquenessOfImmersions}
If an abstract surface $\left(M^2,g\right)$ admits two $PNMC$ biconservative immersions in $\mathbb{S}^4$, then these immersions differ by an isometry of $\mathbb{S}^4$.
\end{theorem}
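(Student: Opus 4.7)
The plan is to invoke the Bonnet-type congruence theorem for isometric immersions into a space form: if two such immersions of $(M^{2},g)$ into $\mathbb{S}^{4}$ have normal bundles related by an isometric bundle isomorphism that preserves the normal connection and intertwines the second fundamental forms, then they differ by an isometry of $\mathbb{S}^{4}$. Theorem~\ref{thm:fundamentalProperties} already tells us that, for any $PNMC$ biconservative immersion, the normal connection is trivial in the frame $\{E_{3},E_{4}\}$ and that the shape operators $A_{3},A_{4}$ in the adapted tangent frame $\{E_{1},E_{2}\}$ are fully determined by the mean curvature $f$ and the constant $c$. The theorem therefore reduces to showing that $f$ and $c^{2}$ are intrinsic invariants of $(M^{2},g)$.

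To extract $f$ from intrinsic data I will use the geodesic curvature of the level curves of $K$. Part~(iii) of Theorem~\ref{thm:fundamentalProperties} shows that $t\mapsto 1-3t^{2}-c^{2}t^{3}$ is strictly decreasing on $(0,\infty)$, so the level sets of $K$ coincide with those of $f$ and $\grad K=-(6f+3c^{2}f^{2})\grad f$. Part~(i) gives $\nabla_{E_{2}}E_{2}=\tfrac{3}{4}(|\grad f|/f)E_{1}$, so the geodesic curvature of a level curve is the intrinsic quantity $\kappa_{g}=\tfrac{3}{4}|\grad f|/f$. Using $c^{2}f=(1-K-3f^{2})/f^{2}$, which is immediate from \eqref{relation-K-f}, the factor $2+c^{2}f$ collapses to $(1-K-f^{2})/f^{2}$, and a short manipulation gives
\[
|\grad K|\;=\;4\kappa_{g}\,(1-K-f^{2}),\qquad\text{hence}\qquad f^{2}\;=\;1-K-\frac{|\grad K|}{4\kappa_{g}}.
\]
The right-hand side depends only on $g$, so $f$ is intrinsic, and then so is $c^{2}=(1-K-3f^{2})/f^{3}$.

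With $f_{1}=f_{2}$ and $c_{1}^{2}=c_{2}^{2}$ in hand for the two immersions $\varphi_{1},\varphi_{2}$, I flip the orientation of $E_{4}$ for one of them if necessary so that the constants agree in sign; this produces an isometric bundle isomorphism between the two normal bundles that preserves the trivial normal connection and intertwines the second fundamental forms. The Bonnet-type theorem then delivers the isometry of $\mathbb{S}^{4}$ carrying $\varphi_{1}$ to $\varphi_{2}$. The delicate point is the intrinsic formula for $f^{2}$: many identities between $f$, $c$ and metric invariants can be assembled from parts~(iii)--(v) of Theorem~\ref{thm:fundamentalProperties}, but the specific cancellation $2+c^{2}f=(1-K-f^{2})/f^{2}$, which makes $(1-K-f^{2})$ emerge as a common factor in $|\grad K|$, is what makes the argument close in a single line.
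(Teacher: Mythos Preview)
Your proposal is correct and follows essentially the same route as the paper. Both arguments first observe that the frame $\{E_{1},E_{2}\}$ is intrinsic (being tied to $\grad K$), then use that the connection coefficient $\tfrac{3}{4}\,E_{1}f/f=\kappa_{g}$ is intrinsic together with the Gauss relation \eqref{relation-K-f} to force $f_{1}=f_{2}$ and $c_{1}^{2}=c_{2}^{2}$, and finally build the normal-bundle isometry $\psi$ (absorbing the sign of $c$ into $E_{4}$) to invoke the fundamental theorem of submanifolds. The only cosmetic difference is that you package the first step as the closed formula $f^{2}=1-K-|\grad K|/(4\kappa_{g})$, whereas the paper instead argues that $E_{1}f_{1}/f_{1}=E_{1}f_{2}/f_{2}$ forces $f_{1}/f_{2}$ to be constant and then uses non-constancy of $f$ to pin down the constant as $1$; your version is slightly more explicit but the underlying idea is identical.
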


\begin{proof}
Let $\varphi_1:M^2\to\mathbb{S}^4$ and $\varphi_2:M^2\to\mathbb{S}^4$ be two $PNMC$ biconservative immersions. From Theorem \ref{thm:fundamentalProperties} we know that
\begin{equation}\label{Kc1c2}
K=1-3f_1^2-c_1^2f_1^3=1-3f_2^2-c_2^2f_2^3, \qquad c_1,c_2\neq 0.
\end{equation}
It follows that
\begin{equation}\label{gradKc1c2}
XK=\left(-6f_1-3c_1^2f_1^2\right)Xf_1=\left(-6f_2-3c_2^2f_2^2\right)Xf_2, \qquad X\in C\left(TM^2\right).
\end{equation}
Let us denote $^iE_1=\grad f_i/\left|\grad f_i\right|$ and consider $^iE_2$ to be the tangent vector field such that $\left\{^iE_1,^iE_2\right\}$ is a positively oriented global orthonormal frame field in the tangent bundle, for $i=1,2$. From \eqref{gradKc1c2} we get
$$
\grad K=\left(-6f_i-3c_i^2f_i^2\right)\grad f_i, \qquad i=1,2,
$$
and since $\grad f_i\neq 0$ at any point, we have
$$
\frac{\grad K}{\left|\grad K\right|}=-\frac{\grad f_i}{\left|\grad f_i\right|}, \qquad i=1,2.
$$
Therefore, $^1E_1=\ ^2E_1=E_1$ and $^1E_2=\ ^2E_2=E_2$. We note that $E_1$ and $E_2$ do not depend on the immersion, but only on the surface.

Using the expression of $\nabla_{E_2}E_1$ in \eqref{Levi-Civita-connection-f}, we have
$$
\frac{E_1f_1}{f_1}=\frac{E_1f_2}{f_2}.
$$
By combining the above equation and \eqref{Kc1c2} we get $f_1=f_2$ and $c_1=\varepsilon c_2$, where $\varepsilon=\pm1$.

Therefore, $^1A_3=\ ^2A_3$ and $^1A_4=\varepsilon\ ^2A_4$. Now, let us define the vector bundle isomorphism $\psi:N_{\varphi_1}M^2\to N_{\varphi_2}M^2$ by
$$
\psi\left(^1H\right)=\ ^2H,\quad \text{i.e.,} \quad \psi\left(^1E_3\right)=\ ^2E_3, \quad  \text{and} \quad \psi\left(^1E_4\right)=\varepsilon\ ^2E_4.
$$
Then, since $f_1=f_2$, we have
\begin{equation*}
\begin{array}{lll}
\langle\psi\left(^1E_\alpha\right), \psi\left(^1E_\beta\right)\rangle=\langle ^1E_\alpha,^1E_\beta\rangle=\delta_{\alpha\beta}, \qquad \alpha,\beta=3,4, \\\\
\psi\left(^1\nabla_{E_i}^\perp\left(^1E_\alpha\right)\right)=\ ^2\nabla_{E_i}^\perp\left(^2E_\alpha\right)=0, \qquad i=1,2,\ \alpha=3,4,\\\\
\psi\left(^1B\left(E_i,E_j\right)\right)=\ ^2B\left(E_i,E_j\right), \qquad i,j=1,2.
\end{array}
\end{equation*}
Thus, from the Fundamental Theorem of Submanifolds (see, for example, \cite{DT2019}), there exists an isometry $\Phi$ of $\mathbb{S}^4$ such that
$$
\Phi\circ \varphi_1=\varphi_2 \qquad \text{and} \qquad \Phi_{\ast|N_1M}=\psi.
$$
\end{proof}

\begin{remark}
We know that, in fact, $E_1$ and $E_2$ belong to the intrinsic geometry of $\left(M^2,g\right)$. Moreover, from \eqref{Kc1c2} and \eqref{gradKc1c2}, we obtain
$$
\nabla_{E_2}{E_1}=-\frac{3}{4}\frac{E_1f}{f}E_2=-\frac{E_1K}{4\left(K-1+f^2\right)}E_2.
$$
Thus, $f$ and then $c^2$, are uniquely determined by $\left(M^2,g\right)$.
\end{remark}

We give now another intrinsic property of the domain $\left(M^2,g\right)$ of a $PNMC$ biconservative immersion in $\mathbb{S}^4$, which will play a fundamental role in the existence of our isometric immersions.

\begin{proposition}\label{Prop:intrinsicPropertyDomain}
Let $\varphi:\left(M^2,g\right)\to \mathbb{S}^4$ be a $PNMC$ biconservative immersion. Then, $\grad K \neq 0$ everywhere and the level curves of $K$ are circles of $M^2$ with positive constant signed curvature
\begin{equation}\label{curvatureOfLevelCurves}
\kappa=\frac{3}{4}\frac{E_1f}{f}=-\frac{\left|\grad K\right|}{4\left(K-1+f^2\right)}.
\end{equation}
Moreover, in the positively oriented local chart  $X^f=X^f(u,v)$,
$$
\kappa(u)=\frac{3}{4}\frac{f'(u)}{f(u)}
$$
and $\kappa$ satisfies the following $ODE$
\begin{equation}\label{eq:kappa1}
3\kappa\kappa''' -26\kappa^2\kappa''-3\kappa'\kappa''+72\kappa^3\kappa'-32\kappa^3-32\kappa^5=0.
\end{equation}
\end{proposition}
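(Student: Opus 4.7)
The plan is to deduce each claim directly from Theorem \ref{thm:fundamentalProperties}, exploiting the fact that $K$ depends only on $f$ and that $f$ is, up to the local chart $X^f$, a function of a single variable.

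First I would observe that relation \eqref{relation-K-f} gives
\[
\grad K = -(6f + 3c^{2}f^{2})\grad f = -3f(2+c^{2}f)\grad f,
\]
so $\grad K$ vanishes nowhere since $f>0$ and $\grad f \neq 0$ by hypothesis. Because $K$ is a smooth strictly monotone function of $f$, the level curves of $K$ coincide with the level curves of $f$; since $E_{2}f=0$ and $E_{1}f=|\grad f|>0$, these level curves are precisely the (reparametrized) integral curves of $E_{2}$, parametrized by arc length. Using \eqref{Levi-Civita-connection-f}, the acceleration of such a curve is
\[
\nabla_{E_{2}}E_{2}=\tfrac{3}{4}\tfrac{E_{1}f}{f}\,E_{1},
\]
which gives the (signed) geodesic curvature $\kappa = \tfrac{3}{4}(E_{1}f)/f>0$. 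To rewrite it in terms of $K$, I would substitute $\grad K = -3f(2+c^{2}f)\grad f$ and note that $K-1+f^{2} = -f^{2}(2+c^{2}f)$, which immediately yields the second expression in \eqref{curvatureOfLevelCurves}.

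Next I would verify that $\kappa$ is constant along each level curve. Since $[E_{1},E_{2}] = \nabla_{E_{1}}E_{2}-\nabla_{E_{2}}E_{1} = \tfrac{3}{4}(E_{1}f/f)\,E_{2}$ by \eqref{Levi-Civita-connection-f}, and $E_{2}f=0$, one gets $E_{2}(E_{1}f) = E_{1}(E_{2}f) + [E_{2},E_{1}]f = 0$. Hence $E_{2}\kappa = 0$, so the level curves are circles in $M^{2}$ with constant geodesic curvature. In the chart $X^{f}$, since $f=f(u)$ and $E_{1}=\grad u$ acts as $d/du$ on functions of $u$ alone, the expression $\kappa(u)=\tfrac{3}{4}f'(u)/f(u)$ is immediate.

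The main work is deriving the third order ODE \eqref{eq:kappa1}. The key observation is that from the relation $\kappa = \tfrac{3}{4}f'/f$ (so $f' = \tfrac{4}{3}\kappa f$) and the second order equation \eqref{second-order-chart-f}, a short computation gives
\[
\kappa' = 1 - 3f^{2} - c^{2}f^{3} + \kappa^{2} = K+\kappa^{2}.
\]
Differentiating once more and using $f' = \tfrac{4}{3}\kappa f$ yields an algebraic expression for $f^{2}$ in terms of $\kappa$ and its first two derivatives, namely
\[
f^{2} = \frac{\kappa'' - 6\kappa\kappa' + 4\kappa + 4\kappa^{3}}{4\kappa}.
\]
Differentiating this once more and again using $f' = \tfrac{4}{3}\kappa f$ to replace $ff'$ produces the desired third order ODE in $\kappa$ alone, the constant $c$ being eliminated in the process. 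The main obstacle is therefore purely computational: one must keep track of several cancellations when differentiating the quotient above and matching coefficients to recover exactly the combination $3\kappa\kappa''' - 26\kappa^{2}\kappa'' - 3\kappa'\kappa'' + 72\kappa^{3}\kappa' - 32\kappa^{3} - 32\kappa^{5} = 0$.
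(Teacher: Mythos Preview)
Your proposal is correct and follows essentially the same route as the paper. The paper introduces the reversed frame $\tilde E_1=-E_1$, $\tilde E_2=-E_2$ to parametrize the level curves, but the content is identical: from \eqref{Levi-Civita-connection-f} one reads off $\kappa$, and the ODE for $\kappa$ is obtained by rewriting \eqref{second-order-chart-f} in terms of $\kappa=\tfrac34 f'/f$ (your relation $\kappa'=K+\kappa^2$ is exactly the paper's intermediate formula $c^2=\tfrac{1+\kappa^2-\kappa'}{f^3}-\tfrac{3}{f}$), then differentiating twice and eliminating $f^2$ and $c^2$.
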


\begin{proof}
We have seen that, since $\grad f\neq 0$, also $\grad K\neq 0$ and
$$
\frac{\grad K}{\left|\grad K\right|}=-\frac{\grad f}{\left|\grad f\right|}.
$$
We define
$$
\tilde{E}_1=\frac{\grad K}{\left|\grad K\right|}=-E_1 \qquad \text{and} \qquad \tilde{E}_2=-E_2
$$
such that $\left\{\tilde{E}_1,\tilde{E}_2\right\}$ is a positively oriented global orthonormal frame field.

As $E_2f=0$, from \eqref{relation-K-f}, it follows that $E_2K=0$ and so $\tilde{E}_2K=0$, i.e., the integral curves of $\tilde{E}_2$ are the level curves of $K$.

We note that $\left\{\tilde{E}_2,-\tilde{E}_1\right\}$ is positively oriented and taking into account \eqref{Levi-Civita-connection-f} and \eqref{relation-K-f} we get
$$
\nabla_{\tilde{E}_1}\tilde{E}_2=0, \quad \nabla_{\tilde{E}_2}\tilde{E}_1=-\frac{|\grad K|}{4\left(K-1+f^2\right)}\tilde{E}_2, \quad \nabla_{\tilde{E}_2}\tilde{E}_2=\frac{|\grad K|}{4\left(K-1+f^2\right)}\tilde{E}_1.
$$
Thus,
\begin{align*}
\left[\tilde{E}_1,\tilde{E}_2\right]K &=0\\
&=-\tilde{E}_2\left(\tilde{E}_1 K\right)
\end{align*}
and so $\tilde{E}_2\left(\tilde{E}_1 K\right)=0$. But this implies
$$
\tilde{E}_2\left(-\frac{|\grad K|}{4\left(K-1+f^2\right)}\right)=0.
$$
Now, as the integral curves of $\tilde{E}_2$ are the level curves of $K$, it follows that
$$
\kappa=-\frac{|\grad K|}{4\left(K-1+f^2\right)}
$$
is constant along the level curves of $K$.

If we consider $\Gamma=\Gamma(v)$ an integral curve of $\tilde{E}_2$, since $\left|\tilde{E}_2\right|=1$, one gets that $\Gamma$ is a curve parametrized by arc-length.

As $\left\{\Gamma', -\tilde{E}_{1|\Gamma}\right\}$ is positively oriented and
$$
\nabla_{\Gamma'}\Gamma'=\kappa_{|\Gamma}\left(-\tilde{E}_{1|\Gamma}\right) \qquad \text{and} \qquad \nabla_{\Gamma'}\left(-\tilde{E}_{1}\right)=-\kappa_{|\Gamma}\Gamma',
$$
where $\kappa_{|\Gamma}$ is a constant, it follows that $\Gamma$ is a circle of $M^2$ with constant curvature $\kappa$.

Therefore, the level curves of $K$ are circles of $M^2$ with constant curvature
$$
\kappa=-\frac{|\grad K|}{4\left(K-1+f^2\right)}.
$$
Finally, if we consider the positively oriented local chart $X^f=X^f(u,v)$ as in Theorem \ref{thm:fundamentalProperties}, we immediately note that $\kappa(u)=3f'(u)/(4f(u))$ and thus
$$
\frac{f''}{f}=\frac{16}{9}\kappa^2 + \frac{4}{3}\kappa'.
$$
Replacing the above relation in \eqref{second-order-chart-f}, by a straightforward computation one obtains
\begin{equation}
\label{ralatia2}
c^2=\frac{1+\kappa^2-\kappa'}{f^3} - \frac{3}{f}.
\end{equation}
Next, taking the derivative of \eqref{ralatia2}, one gets
\begin{equation}
\label{relatia3}
0=-\kappa'' + 6\kappa\kappa' - 4\kappa - 4\kappa^3 + 4\kappa f^2,
\end{equation}
and taking one more derivative, we find
\begin{equation}
\label{relatia4}
0=-3\kappa''' + 18\left(\kappa'\right)^2 + 18\kappa\kappa'' - 12\kappa' - 36\kappa^2\kappa' + 4\left(3\kappa' + 8\kappa^2\right)f^2.
\end{equation}
Finally, from \eqref{relatia3} and \eqref{relatia4}, by standard computations one obtains that $\kappa$ satisfies \eqref{eq:kappa1}.
\end{proof}

%The intrinsic part

\begin{lemma}\label{lemma:uniqueSolution}
Let $\left(M^2,g\right)$ be an abstract surface such that $1-K>0$ and $c$ be a non-zero real constant. Then, the polynomial equation
\begin{equation}\label{polynomialEquation}
1-K-3f^2-c^2f^3=0
\end{equation}
and the condition $f>0$ determine $f$ uniquely.
\end{lemma}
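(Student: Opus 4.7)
The plan is to treat equation \eqref{polynomialEquation} pointwise as a real cubic in $f$ and show that, under the given sign hypotheses, it has exactly one positive root. Fix an arbitrary point $p\in M^2$ and set
\begin{equation*}
P(t)=1-K(p)-3t^2-c^2t^3,\qquad t\in\mathbb{R}.
\end{equation*}
What needs to be proved is that $P$ has a unique zero in $(0,\infty)$; doing this at every point gives the unique positive function $f$ on $M^2$ solving \eqref{polynomialEquation}.

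For existence, I would note that $P(0)=1-K(p)>0$ by hypothesis, while $c\neq 0$ forces the leading term $-c^2t^3$ to dominate as $t\to+\infty$, so $P(t)\to-\infty$. The intermediate value theorem then yields at least one positive root.

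For uniqueness, the key observation is the derivative
\begin{equation*}
P'(t)=-6t-3c^2t^2=-3t(2+c^2t),
\end{equation*}
which is strictly negative for every $t>0$ (since $c\neq 0$). Hence $P$ is strictly decreasing on $(0,\infty)$, and therefore cannot have two positive zeros. Combined with the previous step, this gives a unique $f(p)>0$ satisfying $P(f(p))=0$, and smoothness of the resulting function $f$ on $M^2$ follows from the implicit function theorem applied to $P$, whose derivative in $t$ at the root is nonzero. There is no real obstacle here; the argument is entirely elementary, relying only on the sign of the constant term $1-K$ and the strict monotonicity forced by $c\neq 0$.
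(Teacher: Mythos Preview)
Your argument is correct and essentially identical to the paper's: both fix a point, observe that the cubic $t\mapsto 1-3t^2-c^2t^3$ is strictly decreasing on $(0,\infty)$ (same derivative computation), and combine this with the sign at $0$ and the limit at $+\infty$ to get a unique positive root. The paper phrases this as saying $h(x)=1-3x^2-c^2x^3$ is a diffeomorphism from $(0,\infty)$ onto $(-\infty,1)$ and writes $f=h^{-1}\circ K$, which packages smoothness directly, but this is the same content as your implicit function theorem remark.
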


\begin{proof}
Let $p_0\in M^2$ be an arbitrarily fixed point and consider the polynomial equation
\begin{equation}\label{eqKx}
K\left(p_0\right)=1-3x^2-c^2x^3, \qquad x>0.
\end{equation}
Consider the function $h:(0,\infty)\to(-\infty,1)$ given by 
$$
h(x)=1-3x^2-c^2x^3.
$$
Clearly, $h$ is smooth and $h'(x)<0$ for any $x>0$. As
$$
\lim_{x\searrow 0}h(x)=1 \qquad \text{and} \qquad \lim_{x\nearrow\infty} h(x)=-\infty,
$$
we get that $h$ is a smooth diffeomorphism. Therefore, the solution of \eqref{eqKx} is $x=h^{-1}\left(K\left(p_0\right)\right)$, i.e., $f\left(p_0\right)=h^{-1}\left(K\left(p_0\right)\right)$. Thus, $f=h^{-1}\circ K$ is a smooth function being the composition of two smooth functions, is positive and is the unique solution of \eqref{polynomialEquation}.
\end{proof}

\begin{remark}
Further, we denote by $f_K=h^{-1}\circ K$ the positive smooth solution of \eqref{polynomialEquation}, so the relation \eqref{curvatureOfLevelCurves} can be rewritten as
$$
\kappa=-\frac{\left|\grad K\right|}{4\left(K-1+f_K^2\right)}.
$$
\end{remark}

\begin{proposition}\label{prop:EquivalentConditions}
Let $\left(M^2,g\right)$ be an abstract surface such that $1-K>0$ and $\grad K\neq 0$ everywhere. Consider some non-zero real constant $c$ and $f_K$ the positive solution of \eqref{polynomialEquation}. Let
$$
\tilde{E}_1=\frac{\grad K}{\left|\grad K\right|} \qquad \text{and} \qquad \tilde{E}_2\in C\left(TM^2\right)
$$
be two vector fields on $M^2$ such that $\left\{\tilde{E}_1,\tilde{E}_2\right\}$ is a positively oriented global orthonormal frame field in the tangent bundle $TM^2$. Then, the following conditions are equivalent:
\begin{itemize}
\item [(i)] the level curves of $K$ are circles of $M^2$ with positive constant signed curvature
\begin{equation}\label{curvatureOfLevelCurvesTilde}
\kappa=-\frac{\left|\grad K\right|}{4\left(K-1+f_K^2\right)}=-\frac{\tilde{E}_1 K}{4\left(K-1+f_K^2\right)};
\end{equation}

\item[(ii)] the Levi-Civita connection $\nabla$ of $M^2$ is given by
\begin{equation}\label{Levi-Civita-connection-K}
\nabla_{\tilde{E}_1}\tilde{E}_1=\nabla_{\tilde{E}_1}\tilde{E}_2=0, \quad \nabla_{\tilde{E}_2}\tilde{E}_1=-\frac{\tilde{E}_1 K}{4\left(K-1+f_K^2\right)}\tilde{E}_2, \quad \nabla_{\tilde{E}_2}\tilde{E}_2=\frac{\tilde{E}_1 K}{4\left(K-1+f_K^2\right)}\tilde{E}_1.
\end{equation}
\end{itemize}
\end{proposition}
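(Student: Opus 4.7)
The plan is to parametrize the Levi-Civita connection in the orthonormal frame $\{\tilde E_1,\tilde E_2\}$ by two smooth functions and then extract both implications from a single Lie-bracket computation. Writing $\nabla_{\tilde E_2}\tilde E_1=\alpha\,\tilde E_2$ and $\nabla_{\tilde E_1}\tilde E_1=\beta\,\tilde E_2$ for some smooth functions $\alpha,\beta$ on $M^2$, metric compatibility forces $\nabla_{\tilde E_2}\tilde E_2=-\alpha\,\tilde E_1$ and $\nabla_{\tilde E_1}\tilde E_2=-\beta\,\tilde E_1$, so the connection identities \eqref{Levi-Civita-connection-K} collapse to the two scalar equalities $\alpha=\kappa$ and $\beta=0$, where $\kappa$ is the function in \eqref{curvatureOfLevelCurvesTilde}.

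Next I would record the standing facts: $\tilde E_2K=0$ by the choice of $\tilde E_1$, so the integral curves of $\tilde E_2$ are exactly the level curves of $K$, arc-length parametrized because $|\tilde E_2|=1$; $\tilde E_1K=|\grad K|>0$; and $f_K$, being a smooth function of $K$ alone by Lemma \ref{lemma:uniqueSolution}, is itself constant along level curves of $K$. Computing $[\tilde E_1,\tilde E_2]=-\beta\,\tilde E_1-\alpha\,\tilde E_2$ from the above expressions and applying it to $K$, one side gives $-\beta|\grad K|$ while the other gives $-\tilde E_2|\grad K|$. Hence $\beta\,|\grad K|=\tilde E_2|\grad K|$, and since $|\grad K|>0$ this is the key equivalence: $\beta=0$ if and only if $|\grad K|$ (equivalently $\kappa$, since $K$ and $f_K$ are already constant along level curves) is constant along each integral curve of $\tilde E_2$.

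For (i) $\Rightarrow$ (ii) I would parametrize a level curve as an integral curve $\Gamma$ of $\tilde E_2$; the positively oriented unit normal is $N=-\tilde E_1$, so $\nabla_{\Gamma'}\Gamma'=-\alpha\,\tilde E_1=\alpha\,N$ shows that the signed geodesic curvature of $\Gamma$ equals $\alpha$. Hypothesis (i) identifies this curvature with $\kappa$ and asserts it is constant along $\Gamma$; the first statement gives $\alpha=\kappa$, and the constancy combined with the key identity above forces $\beta=0$. The converse (ii) $\Rightarrow$ (i) is the same chain read backwards: reading off $\alpha=\kappa$ and $\beta=0$ from \eqref{Levi-Civita-connection-K}, the bracket identity yields $\tilde E_2|\grad K|=0$, so $\kappa$ is constant along each integral curve of $\tilde E_2$, and then $\nabla_{\tilde E_2}\tilde E_2=\kappa\,N$ identifies these curves as circles of $M^2$ with constant signed curvature $\kappa$. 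I expect the only truly delicate step to be the sign and orientation bookkeeping needed to identify the structure function $\alpha$ with the specific quantity $\kappa$ of \eqref{curvatureOfLevelCurvesTilde}; once that is pinned down, the rest of the proposition collapses to the single Lie-bracket identity displayed above.
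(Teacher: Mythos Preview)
Your proposal is correct and follows essentially the same approach as the paper: both arguments hinge on the single Lie-bracket identity $[\tilde E_1,\tilde E_2]K=\tilde E_1(\tilde E_2K)-\tilde E_2(\tilde E_1K)=(\nabla_{\tilde E_1}\tilde E_2-\nabla_{\tilde E_2}\tilde E_1)(K)$, which in your notation reads $\beta\,|\grad K|=\tilde E_2|\grad K|$, together with the identification of the signed geodesic curvature of the level curves with the structure function you call $\alpha$. Your upfront parametrization of the connection by $\alpha,\beta$ makes the bookkeeping slightly more transparent than the paper's presentation, but the underlying computation and logical flow are the same.
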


\begin{proof}
First, we assume that $(i)$ holds and want to prove $(ii)$. As $\left\{\tilde{E}_2,-\tilde{E}_1\right\}$ is positively oriented and the level curves of $K$, i.e. the integral curves of $\tilde{E}_2$, are circles of $M^2$ with constant signed curvature $\kappa$ given in \eqref{curvatureOfLevelCurvesTilde}, it follows that $\tilde{E}_2\kappa=0$. Thus, we have
$$
\nabla_{\tilde{E}_2}\tilde{E}_2=\kappa\left(-\tilde{E}_1\right)=\frac{\tilde{E}_1 K}{4\left(K-1+f_K^2\right)}\tilde{E}_1
$$
and
$$
\nabla_{\tilde{E}_2}\left(-\tilde{E}_1\right)=-\kappa\tilde{E}_2=\frac{\tilde{E}_1 K}{4\left(K-1+f_K^2\right)}\tilde{E}_2.
$$
Now, using \eqref{curvatureOfLevelCurvesTilde} and $\tilde{E}_2f_K=0$, we obtain $\tilde{E}_2\left(\tilde{E}_1K\right)=0$.

We note that
\begin{align*}
\left[\tilde{E}_1,\tilde{E}_2\right]K & =\tilde{E}_1\left(\tilde{E}_2 K\right)-\tilde{E}_2\left(\tilde{E}_1 K\right)=0 \\
& = \left(\nabla_{\tilde{E}_1}\tilde{E}_2-\nabla_{\tilde{E}_2}\tilde{E}_1\right)(K)=\left(\nabla_{\tilde{E}_1}\tilde{E}_2\right)(K).
\end{align*}
Therefore, we have $\left(\nabla_{\tilde{E}_1}\tilde{E}_2\right)(K)=\langle \nabla_{\tilde{E}_1}\tilde{E}_2, \grad K \rangle=0$, and so $\langle  \nabla_{\tilde{E}_1}\tilde{E}_2, \tilde{E}_1 \rangle=0$. Consequently,
$$
\nabla_{\tilde{E}_1}\tilde{E}_2=0 \qquad \text{and} \qquad \nabla_{\tilde{E}_1}\tilde{E}_1=0.
$$
Finally, the converse is easy to prove since from the expression of the Levi-Civita connection we obtain $\tilde{E}_2\left(\tilde{E}_1K\right)=0$ and thus $\tilde{E}_2\kappa=0$. Then, we conclude from the Frenet formulas applied for the integral curves of $\tilde{E}_2$.

\end{proof}

The following result represents an intrinsic characterization for the existence of $PNMC$ biconservative immersions in $\mathbb{S}^4$.

\begin{theorem}\label{thm:intrinsicCharacterization}
Let $\left(M^2,g\right)$ be an abstract surface. Then $M^2$ admits locally a (unique) $PNMC$ biconservative embedding in $\mathbb{S}^4$ if and only if $1-K>0$, $\grad K\neq 0$ everywhere and the level curves of $K$ are circles of $M^2$ with positive constant signed curvature
$$
\kappa=-\frac{\left|\grad K\right|}{4\left(K-1+f_K^2\right)},
$$
where $f_K$ is the positive solution of \eqref{polynomialEquation} for some non-zero real constant $c$.
\end{theorem}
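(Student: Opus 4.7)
The statement is an equivalence, so the plan splits into a direct implication and a converse. The direct implication (``only if'') is essentially a repackaging of earlier results: if $\varphi:(M^2,g)\to\mathbb{S}^4$ is a $PNMC$ biconservative immersion, Theorem \ref{thm:fundamentalProperties}(iii) gives $1-K=3f^2+c^2 f^3>0$ and forces $f$ to be the unique positive solution $f_K$ of \eqref{polynomialEquation} (Lemma \ref{lemma:uniqueSolution}), while Proposition \ref{Prop:intrinsicPropertyDomain} yields that $\grad K\neq 0$ and that the level curves of $K$ are circles with constant signed curvature of the required form. Uniqueness of the immersion (up to ambient isometries of $\mathbb{S}^4$) is Theorem \ref{thm:uniquenessOfImmersions}, so the bulk of the work lies in the converse.

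For the converse, starting from the intrinsic data, I would first define $f:=f_K$, which by Lemma \ref{lemma:uniqueSolution} is a well-defined smooth positive function, and introduce the positively oriented frame $\{\tilde E_1,\tilde E_2\}$ with $\tilde E_1=\grad K/|\grad K|$. Proposition \ref{prop:EquivalentConditions} then provides exactly the Levi-Civita structure \eqref{Levi-Civita-connection-K}, which, after switching to $E_1:=-\tilde E_1$, $E_2:=-\tilde E_2$, reproduces \eqref{Levi-Civita-connection-f}. Next, I would build an abstract oriented Riemannian rank-$2$ vector bundle $NM^2\to M^2$ equipped with the flat metric connection $\nabla^\perp$ for which a global orthonormal frame $\{E_3,E_4\}$ is parallel, and define a symmetric bilinear form $B$ with values in $NM^2$ by
\begin{equation*}
B(E_1,E_1)=-f\,E_3+cf^{3/2}E_4,\qquad B(E_2,E_2)=3f\,E_3-cf^{3/2}E_4,\qquad B(E_1,E_2)=0.
\end{equation*}
The key step is then to verify that the triple $(g,B,\nabla^\perp)$ satisfies the Gauss, Codazzi and Ricci equations of a submanifold of $\mathbb{S}^4$. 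Ricci is immediate since $\nabla^\perp=0$ and each shape operator is a multiple of the identity on the respective eigenline. The Gauss equation reduces to $K=1+\langle B(E_1,E_1),B(E_2,E_2)\rangle=1-3f^2-c^2 f^3$, which is precisely \eqref{polynomialEquation}. The only genuinely non-trivial check is Codazzi for $A_3$ and $A_4$: here I would compute $(\nabla_{E_1}A_\alpha)(E_2)-(\nabla_{E_2}A_\alpha)(E_1)$ using \eqref{Levi-Civita-connection-f} and the fact that $E_2 f=0$, reducing both equalities to an identity in $f'$ and $f$.

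Once these compatibility equations hold, the Fundamental Theorem of Submanifolds in space forms (as cited in the proof of Theorem \ref{thm:uniquenessOfImmersions}) produces a local isometric embedding $\varphi:(M^2,g)\to\mathbb{S}^4$ realizing the prescribed second fundamental form and normal bundle. It remains to identify this $\varphi$ as $PNMC$ biconservative: by construction $H=f E_3$ with $\nabla^\perp E_3=0$, giving the $PNMC$ property, and $A_3(\grad f)=A_3(f'E_1)=-f f' E_1=-f\grad f$, which is precisely \eqref{eq:PNMC-biconservative}. Uniqueness of the embedding follows from Theorem \ref{thm:uniquenessOfImmersions}. The main conceptual obstacle is the Codazzi verification, because it is here that the intrinsic hypothesis on $\kappa$ is converted, through \eqref{Levi-Civita-connection-K} and the algebraic link $1-K=3f^2+c^2f^3$, into the precise relation $E_1 f=\tfrac{4}{3}\kappa f$ needed for both Codazzi identities to close; once this is done the rest is essentially bookkeeping.
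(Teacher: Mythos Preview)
Your approach is essentially the same as the paper's: invoke Proposition~\ref{Prop:intrinsicPropertyDomain} for the direct implication, and for the converse build an abstract flat normal bundle with parallel frame $\{E_3,E_4\}$, define $B$ by the prescribed formulas, verify Gauss--Codazzi--Ricci, and apply the Fundamental Theorem of Submanifolds. Your identification of the Codazzi check as the place where the hypothesis on $\kappa$ enters (via $E_1 f=\tfrac{4}{3}\kappa f$) is exactly right.

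There is one omission. Under the paper's standing conventions (stated at the end of the Introduction), a ``$PNMC$ biconservative immersion'' is assumed to have $f>0$, $\grad f\neq 0$, \emph{and} to be completely contained in $\mathbb{S}^4$, i.e.\ no open subset lies in a totally geodesic $\mathbb{S}^3$. You verify the first two (implicitly, via $\grad K\neq 0$ and the algebraic link), but you do not address the third. The paper closes this by observing that $B(E_1,E_1)=-fE_3+cf^{3/2}E_4$ and $B(E_2,E_2)=3fE_3-cf^{3/2}E_4$ are linearly independent at every point (since $c\neq 0$), so the first normal space has dimension $2$; were an open piece to lie in some totally geodesic $\mathbb{S}^3\subset\mathbb{S}^4$, the constant normal $\eta_0$ to that $\mathbb{S}^3$ would satisfy $A_{\eta_0}=0$ and force the first normal space to be one-dimensional, a contradiction. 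This step is short, but without it your converse is, strictly speaking, incomplete relative to the paper's conventions.
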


\begin{proof}
The direct implication has been already proved in Proposition \ref{Prop:intrinsicPropertyDomain}. It remains to show the converse one.

Let
$$
\tilde{E}_1=\frac{\grad K}{\left|\grad K\right|} \qquad \text{and} \qquad \tilde{E}_2\in C\left(TM^2\right)
$$
be two vector fields on $M^2$ such that $\left\{\tilde{E}_1,\tilde{E}_2\right\}$ is a positively oriented global orthonormal frame field. Clearly, $\tilde{E}_2K=0$ and $\tilde{E}_1 K=|\grad K|$. Now, since $f_K$ is the positive solution of \eqref{polynomialEquation}, for some non-zero real constant $c$, it follows that $\tilde{E}_2f_K=0$ and $\grad f_K\neq 0$ at any point. We set
$$
E_1=\frac{\grad f_K}{\left|\grad f_K\right|}=-\tilde{E}_1 \qquad \text{and} \qquad E_2=-\tilde{E}_2,
$$
and it is clear that $\left\{E_1,E_2\right\}$ is a positively oriented global orthonormal frame field.

Next, let $\Upsilon=M^2\times\mathbb{E}^2$ be the trivial vector bundle of rank two over $M^2$. We define $\sigma_3$ and $\sigma_4$  by
\begin{align*}
\sigma_3(p) &=(p,(1,0)), \qquad p\in M^2, \\
\sigma_4(p) &=(p,(0,1)), \qquad p\in M^2,
\end{align*}
which form the canonical global frame field of $\Upsilon$, by $a$ the metric on $\Upsilon$ defined by
$$
a\left(\sigma_\alpha,\sigma_\beta\right)=\langle \sigma_\alpha,\sigma_\beta \rangle=\delta_{\alpha\beta}, \qquad \alpha,\beta=3,4,
$$
and by $\nabla^a$ the connection on $\Upsilon$ given by
$$
\nabla^a_{E_i}\sigma_\alpha=0, \qquad i=1,2,\  \alpha=3,4.
$$
Clearly, the pair $\left(\nabla^a,a\right)$ is a Riemannian structure, i.e.,
$$
X\langle\sigma,\rho\rangle=\langle\nabla^a_X\sigma,\rho\rangle+\langle \sigma, \nabla^a_X\rho\rangle, \qquad X\in C\left(TM^2\right), \ \rho,\sigma\in C\left(\Upsilon\right),
$$
and the curvature tensor field
$$
R^a\left(E_i,E_j\right)\sigma_\alpha=0, \qquad i=1,2,\  \alpha=3,4.
$$
Let us define $B^a:C\left(TM^2\right)\times C\left(TM^2\right)\to C\left(\Upsilon\right)$ by
\begin{equation*}
\left\{
\begin{array}{lll}
B^a\left(E_1,E_1\right)=-f_K\sigma_3+cf_K^{3/2}\sigma_4 \\\\
B^a\left(E_1,E_2\right)=B^a\left(E_2,E_1\right)=0 \\\\
B^a\left(E_2,E_2\right)= 3f_K\sigma_3-cf_K^{3/2}\sigma_4
\end{array}
\right..
\end{equation*}
Consider $A^a_\alpha\in C\left(End\left(TM^2\right)\right)$, $\alpha=3,4$, given by
$$
\langle A^a_\alpha\left(E_i\right),E_j\rangle=\langle B^a\left(E_i,E_j\right),\sigma_\alpha\rangle, \qquad i,j=1,2, \ \alpha=3,4.
$$
It is easy to see that $A^a_\alpha$ satisfies, formally, the Gauss, Codazzi and Ricci equations for surfaces in $\mathbb{S}^4$. Therefore, according to the Fundamental Theorem of Submanifolds, locally, there exists an isometric embedding $\varphi:\left(M^2,g\right)\to \mathbb{S}^4$ and a vector bundle isometry $\psi:\Upsilon\to NM$ such that
$$
\nabla^\perp \psi=\psi\nabla^a \qquad \text{and} \qquad B=\psi\circ B^a,
$$
i.e.,
\begin{equation*}
\left\{
\begin{array}{lll}
\nabla^\perp_{E_i}\left(\psi\left(\sigma_\alpha\right)\right)=\psi\left(\nabla^a_{E_i}\sigma_\alpha\right)=0\\\\
B\left(E_i,E_j\right)=\psi\left(B^a\left(E_i,E_j\right)\right), \qquad i,j=1,2,\ \alpha=3,4.
\end{array}
\right.
\end{equation*}
As the last step, we denote by $E_\alpha=\psi\left(\sigma_\alpha\right)$ , $\alpha=3,4$. With all the above notations we can see that, with respect to $\left\{E_1, E_2\right\}$, the shape operators $A_{E_\alpha}=A_\alpha$ have the matrices
\begin{equation*}
A_3=\left(
\begin{array}{cc}
-f_K & 0 \\
0 & 3f_K
\end{array}
\right),\quad
A_4=\left(
\begin{array}{cc}
cf^{3/2}_K & 0 \\
0 & -cf^{3/2}_K
\end{array}
\right),
\end{equation*}
the mean curvature function of the immersion $\varphi$ is $f=f_K$ and $E_3=H/f$.

Now, we prove that $\varphi$ is a $PNMC$ biconservative immersion in $\mathbb{S}^4$ with $f>0$, $\grad f\neq 0$ at any point, and the surface is completely contained in $\mathbb{S}^4$.

First, by straightforward computations, we obtain that $\varphi$ is a $PNMC$ biconservative immersion in $\mathbb{S}^4$ with $f>0$ and $\grad f\neq 0$ at any point. It remains to prove that any open subset of $M^2$ cannot lie in some totally geodesic $\mathbb{S}^3\subset\mathbb{S}^4$. Indeed, since
$$
B\left(E_1,E_1\right)=-fE_3+cf^{3/2}E_4
$$
and
$$
B\left(E_2,E_2\right)= 3fE_3-cf^{3/2}E_4,
$$
it follows that $\left\{B\left(E_1,E_1\right)(p), B\left(E_2,E_2\right)(p)\right\}$ is a basis in $N_pM^2$, for any $p\in M^2$.

Let us consider the first normal bundle $N_1$ defined by
\begin{align*}
N_1=\Span \Imag(B) & = \Span\left\{B\left(E_1,E_1\right), B\left(E_2,E_2\right)\right\} \\
                & = \Span\left\{E_3,E_4\right\}.
\end{align*}
Clearly, $\dim N_1=2$.

Assume that $M^2$, or an arbitrary open subset of $M^2$, lies in a totally geodesic hypersurface $\mathbb{S}^3\subset\mathbb{S}^4$ defined by
\begin{equation*}
\mathbb{S}^3:\left\{
\begin{array}{ll}
\langle \overline{x},\eta_0\rangle=0 \\\\
\left|\overline{x}\right|=1
\end{array}
\right. .
\end{equation*}
The normal bundle of $M^2$ in $\mathbb{S}^4$ can be written as
$$
NM=\Span \left\{E_3,E_4\right\}=\Span\left\{\eta_1,\eta_0\right\},
$$
where $\eta_1$ is a unit normal vector field to $M^2$ in $\mathbb{S}^3$. Moreover, we denote by $\tilde{\nabla}$ the Levi-Civita connection on $\mathbb{S}^4$ and it is easy to see that, since
\begin{align*}
\hat{\nabla}_X\eta_0 & = 0 \\
& = \tilde{\nabla}_X \eta_0 - \langle X,\eta_0\rangle (i\circ\varphi) = \tilde{\nabla}_X \eta_0 = -A_{\eta_0}(X)+\nabla^\perp_X \eta_0,
\end{align*}
we have $A_{\eta_0}(X)=0$, for any $X\in C\left(TM^2\right)$. Therefore, 
$$
B(X,Y)=\langle A_{\eta_1}(X),Y\rangle\eta_1
$$
and so, $N_1=\Span\left\{\eta_1\right\}$, i.e., $\dim N_1=1$, which is a contradiction.
\end{proof}

Further, we will effectively establish the existence of $PNMC$ biconservative immersions in $\mathbb{S}^4$. As we have seen in Theorem \ref{thm:intrinsicCharacterization}, to prove the existence of a such immersion is equivalent to prove the existence of an abstract surface $\left(M^2,g\right)$ such that the level curves of $K$ are circles with a certain curvature $\kappa$. For this purpose, the idea is to work with the curvature $\kappa$, not with $K$ itself, and to show that $\kappa$ satisfies a third order $ODE$ (see \eqref{eq:kappa2}). First, we give

\begin{theorem}\label{thm:RelationBetweenKandk}
Let $\left(M^2,g\right)$ be an abstract surface such that $1-K>0$, $\grad K\neq 0$ everywhere and the level curves of $K$ are circles of $M^2$ with positive constant signed curvature
$$
\kappa=-\frac{\left|\grad K\right|}{4\left(K-1+f_K^2\right)},
$$
where $f_K$ is the positive solution of \eqref{polynomialEquation} for some non-zero real constant $c$. Then, around any point of $M^2$ there exists a positively oriented local chart $X^K=X^K(u,s)$ such that
\begin{equation*}
\begin{array}{ll}
\left(K\circ X^K\right)(u,s)=K(u,s)=K(u),\\\\
\left(\kappa\circ X^K\right)(u,s)=\kappa(u,s)=\kappa(u)
\end{array}
\end{equation*}
and
\begin{equation}\label{eq:K-kappa-kappa'}
K(u)=-\kappa^2(u)-\kappa'(u).
\end{equation}
Moreover, the metric $g$ has the form
$$
g(u,s)=du^2+\theta^2(u)ds^2,
$$
where
$$
\theta(u)=e^{\int_{u_0}^u\kappa(\tau)\  d\tau},
$$
and its Gaussian curvature is
$$
K(u)=-\frac{\theta''(u)}{\theta(u)}.
$$
\end{theorem}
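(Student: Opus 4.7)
The plan is to construct the chart by integrating two dual 1-forms associated with the frame $\{\tilde{E}_1,\tilde{E}_2\}$, exploiting the rigid structure of the Levi-Civita connection supplied by Proposition \ref{prop:EquivalentConditions}. Set $\omega^1:=g(\tilde{E}_1,\cdot)$ and $\omega^2:=g(\tilde{E}_2,\cdot)$. Substituting the definition \eqref{curvatureOfLevelCurvesTilde} of $\kappa$ into \eqref{Levi-Civita-connection-K} gives $\nabla_{\tilde{E}_2}\tilde{E}_1=\kappa\,\tilde{E}_2$, and hence $[\tilde{E}_1,\tilde{E}_2]=\nabla_{\tilde{E}_1}\tilde{E}_2-\nabla_{\tilde{E}_2}\tilde{E}_1=-\kappa\,\tilde{E}_2$. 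Since this bracket has no $\tilde{E}_1$-component, $d\omega^1(\tilde{E}_1,\tilde{E}_2)=-\omega^1([\tilde{E}_1,\tilde{E}_2])=0$, so $\omega^1$ is closed. The Poincaré lemma furnishes, around an arbitrary point $p_0$, a smooth function $u$ with $du=\omega^1$ and $u(p_0)=0$; in particular $\tilde{E}_1 u=1$, $\tilde{E}_2 u=0$. Because $\tilde{E}_2 K=0=\tilde{E}_2 u$, the function $K$ depends only on $u$ locally; the same holds for $f_K$ via \eqref{polynomialEquation}, and hence for $\kappa$ via \eqref{curvatureOfLevelCurvesTilde}.

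Next, define $\theta(u):=\exp\bigl(\int_{u_0}^u\kappa(\tau)\,d\tau\bigr)$, so $\theta'=\kappa\theta$; viewed as a function on $M^2$, $\theta$ satisfies $\tilde{E}_1\theta=\kappa\theta$ and $\tilde{E}_2\theta=0$. A computation analogous to the one above yields $d\omega^2=\kappa\,\omega^1\wedge\omega^2$, so
\[
d\!\left(\frac{\omega^2}{\theta}\right)=-\frac{d\theta}{\theta^2}\wedge\omega^2+\frac{1}{\theta}\,d\omega^2=-\frac{\kappa}{\theta}\,\omega^1\wedge\omega^2+\frac{\kappa}{\theta}\,\omega^1\wedge\omega^2=0.
\]
A second application of the Poincaré lemma produces a local function $s$ with $ds=\omega^2/\theta$, i.e.\ $\omega^2=\theta\,ds$. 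Because $\omega^1\wedge\omega^2\ne 0$ and $\theta>0$, $(u,s)$ is a local chart around $p_0$, and the identity $du\wedge ds=(1/\theta)\,\omega^1\wedge\omega^2$ shows that it inherits the positive orientation from $\{\tilde{E}_1,\tilde{E}_2\}$.

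Writing $g=\omega^1\otimes\omega^1+\omega^2\otimes\omega^2=du^2+\theta^2(u)\,ds^2$ establishes the asserted form of the metric; the identities $(K\circ X^K)(u,s)=K(u)$ and $(\kappa\circ X^K)(u,s)=\kappa(u)$ are then tautological. For the Gaussian curvature of a warped-product metric of this shape, the standard computation --- via the Cartan structure equations applied to the orthonormal coframe $\{du,\theta\,ds\}$, for instance --- gives $K=-\theta''/\theta$. Differentiating $\theta'=\kappa\theta$ one more time produces $\theta''=(\kappa^2+\kappa')\theta$, so $K=-\kappa^2-\kappa'$, which is \eqref{eq:K-kappa-kappa'}. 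The main obstacle is bookkeeping rather than substance: one must first close $\omega^1$ to obtain $u$, then use $u$ to regard $\kappa$ as a one-variable function and to build $\theta(u)$, and only then verify the closedness of $\omega^2/\theta$; the computations themselves are short, but the sign of $\kappa$ in the Lie bracket must be tracked with care.
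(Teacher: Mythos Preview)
Your argument is correct and is a genuinely different---and cleaner---route than the paper's. The paper first builds a chart $X^K(u,v)$ by flowing an integral curve of $\tilde{E}_1$ along the flow of $\tilde{E}_2$, derives $K=-\kappa^2-\kappa'$ from the curvature tensor directly, and then \emph{afterwards} searches for a rescaling $s=v/\theta(u)$ that diagonalises the metric; this second step involves writing $\tilde{E}_1$ in terms of $X^K_u,X^K_s$, computing the Christoffel symbol $\Gamma_{22}^1$, and solving for $g_{12}$ to force it to vanish. Your approach bypasses all of this: by observing that $\omega^1$ and $\omega^2/\theta$ are both closed and applying the Poincar\'e lemma twice, you obtain the orthogonal coordinates $(u,s)$ in one stroke, and the relation $K=-\kappa^2-\kappa'$ then falls out of the warped-product curvature formula together with $\theta'=\kappa\theta$. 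What the paper's method buys is a more explicit geometric picture (the $u$-curves are literally integral curves of $\tilde{E}_1$, the $s$-curves integral curves of $\tilde{E}_2$ up to reparametrisation), whereas your coframe argument is shorter, coordinate-free until the very end, and makes transparent \emph{why} an integrating factor $\theta$ with $\theta'/\theta=\kappa$ is exactly what is needed.
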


\begin{proof}
First, recall that
$$
\tilde{E}_1=\frac{\grad K}{\left|\grad K\right|} \qquad \text{and} \qquad \tilde{E}_2\in C\left(TM^2\right)
$$
are two vector fields on $M^2$ such that $\left\{\tilde{E}_1,\tilde{E}_2\right\}$ is a positively oriented global orthonormal frame field.

For the first part of the theorem, let $p_0\in M^2$ be an arbitrarily fixed point of $M^2$ and $\gamma=\gamma(u)$ be an integral curve of $\tilde{E}_1=\grad K /|\grad K|$ with $\gamma(0)=p_0$. Let $\left\{\phi_v\right\}_{v\in\mathbb{R}}$ be the flow of $\tilde{E}_2$. We define the following positively oriented local chart
$$
X^K(u,v)=\phi_v(\gamma(u))=\phi_{\gamma(u)}(v)
$$
and we have
\begin{align*}
	&X^K(u,0)=\gamma(u), \\
	&X^K_u(u,0)=\gamma'(u)=\tilde{E}_1(\gamma(u))=\tilde{E}_1(u,0),  \\
	&X^K_v(u,v)=\phi'_{\gamma(u)}(v)=\tilde{E}_2\left(\phi_{\gamma(u)}(v)\right)=\tilde{E}_2(u,v).
\end{align*}
Clearly, $\tilde{E}_1 K=|\grad K|$ and $\tilde{E}_2K=0$. For any $u$, the curve $v\to \phi_{\gamma(u)}(v)$ is a level curve of $K$ and so
\begin{equation*}
\begin{array}{ll}
\left(K\circ X^K\right)(u,v)=K(u,v)=K(u),\\\\
\left(\kappa\circ X^K\right)(u,v)=\kappa(u,v)=\kappa(u).
\end{array}
\end{equation*}
Using \eqref{Levi-Civita-connection-K}, we get
$$
\kappa=-\langle \nabla_{\tilde{E}_2}\tilde{E}_2,\tilde{E}_1\rangle
$$
and thus
\begin{align*}
\kappa' & = \tilde{E}_1\kappa = -\tilde{E}_1\langle \nabla_{\tilde{E}_2}\tilde{E}_2,\tilde{E}_1\rangle \\ & = -\langle \nabla_{\tilde{E}_1}\nabla_{\tilde{E}_2}\tilde{E}_2,\tilde{E}_1\rangle = -\langle R(\tilde{E}_1,\tilde{E}_2)\tilde{E}_2,\tilde{E}_1\rangle - \langle\nabla_{[\tilde{E}_1,\tilde{E}_2]}\tilde{E}_2,\tilde{E_1}\rangle \\
& = -K-\kappa^2.
\end{align*}

For the second part of the theorem, let  $p_0\in M^2$ be an arbitrarily fixed point of $M^2$. In order to find a simpler expression of the metric $g$ on $M^2$, we look for new local coordinates $u$ and $s$, where $s=v/\theta(u)$, $\theta(u)>0$, such that $X^K_u$ and $X^K_s$ are orthogonal. The parameter $u$ remains the same and we only change homothetically the parameter $v$ of the level curves of $K$. Clearly, in the new coordinates, $K(u,s)=K(u)$ and $\kappa(u,s)=\kappa(u)$.

So, we consider
$$
X^K(u,s)= \phi_{\theta(u)s}(\gamma(u))=\phi_{\gamma(u)}(\theta(u)s).
$$
It is easy to see that $X^K(u,0)=\gamma(u)$ and
$$
X^K_u(u,0)=\tilde{E}_1(u,0), \qquad X^K_s(u,s)=\theta(u)\tilde{E}_2(u,s).
$$
Therefore, for any $(u,s)$ we have
$$
g_{11}(u,0)=1, \qquad g_{12}(u,0)=0, \qquad g_{22}(u,s)=\theta^2(u).
$$
%So,
%\begin{equation}\label{E2tilde}
%\tilde{E}_2(u,s)=\frac{1}{\theta(u)}X^K_s(u,s),
%\end{equation}
%and, therefore,
%$$
%g_{11}(u,0)=1, \qquad g_{12}(u,0)=0, \qquad g_{22}(u,s)=\theta^2(u).
%$$
%Since $\tilde{E}_2K=0$, from \eqref{E2tilde}, we have that $K(u,s)=K(u)$.
By some direct computations, we deduce
$$
\tilde{E}_1(u,s)=\frac{1}{\sigma(u,s)}\left(X^K_u(u,s)-\frac{g_{12}(u,s)}{\theta^2(u)}X^K_s(u,s)\right),
$$
where
$$
\sigma(u,s)=\sqrt{g_{11}(u,s)-\frac{g_{12}^2(u,s)}{\theta^2(u)}}
$$
and $\sigma(u,0)=1$. So, $\tilde{E}_1K=K'(u)/\sigma(u,s)$.

Further, using the hypothesis that the level curves of $K$, which are the integral curves of $\tilde{E}_2$, are circles with constant signed curvature, we get
$$
\kappa=\kappa(u,s)=\kappa(u)=-\frac{K'(u)}{4\sigma(u,s)\left(K(u)-1+f_K^2(u)\right)}.
$$
Therefore,
$$
\sigma=\sigma(u,s)=\sigma(u)=\sigma(u,0)=1,
$$
which implies
$$
\tilde{E}_1(u,s)=X^K_u(u,s)-\frac{g_{12}(u,s)}{\theta^2(u)}X^K_s(u,s)
$$
and
\begin{equation}\label{g11}
g_{11}(u,s)-\frac{g_{12}^2(u,s)}{\theta^2(u)}=1.
\end{equation}
Using \eqref{Levi-Civita-connection-K}, we have $\nabla_{\tilde{E}_2}\tilde{E}_2=\kappa\left(-\tilde{E}_1\right)$, so $\kappa=-\langle \nabla_{\tilde{E}_2}\tilde{E}_2,\tilde{E}_1\rangle$. Next, as $\tilde{E}_1=X^K_u-g_{12}X^K_s/\theta^2$, $\tilde{E}_2=X^K_s/\theta$ and 
$$ 
\nabla_{X^K_s}X^K_s=\Gamma_{22}^1 X^K_u+\Gamma_{22}^2X^K_s,
$$
after some computations, we reach
$$
\kappa(u)=-\frac{\Gamma_{22}^1(u,s)}{\theta^2(u)}.
$$
Further, from the expression of the Christoffel symbol in terms of the metric coefficients, we obtain
$$
\Gamma_{22}^1(u,s)=\frac{\partial g_{12}}{\partial s}(u,s)-\theta(u)\theta'(u),
$$
and thus
$$
\kappa(u)=-\frac{1}{\theta^2(u)}\left(\frac{\partial g_{12}}{\partial s}(u,s)-\theta(u)\theta'(u)\right).
$$
Taking into account that $g_{12}(u,0)=0$, from the above relation it follows that
\begin{equation}\label{g12}
g_{12}(u,s)=\left(-\theta^2(u)\kappa(u)+\theta(u)\theta'(u)\right)s.
\end{equation}
Now, if we impose that $g_{12}(u,s)=0$, from \eqref{g11} we have $g_{11}(u,s)=1$, and from  \eqref{g12} we get
\begin{equation}\label{eq:kappa-theta'-theta}
\frac{\theta'(u)}{\theta(u)}=\kappa(u).
\end{equation}
Integrating the above relation, it follows that
$$
\theta(u)=e^{\int_{u_0}^u\kappa(\tau)\  d\tau}.
$$
Finally, we obtain that
$$
g(u,s)=du^2+\theta^2(u)ds^2.
$$
From \eqref{eq:K-kappa-kappa'} and \eqref{eq:kappa-theta'-theta} or, alternatively, from \eqref{Gauss-equation} and computing all the Christoffel symbols, we can see that
$$
K(u)=-\frac{\theta''(u)}{\theta(u)}.
$$
\end{proof}

\begin{remark}
Around an arbitrary point $p_0\in M^2$ we have
$$
X^K(u,v)=X^f(-u,-v),
$$
where $X^K=X^K(u,v)$ was defined in the first part of the proof of Theorem \ref{thm:RelationBetweenKandk}.
\end{remark}

\begin{theorem}\label{thm:RelationBetweenkandf_K}
Let $\left(M^2,g\right)$ be an abstract surface such that $1-K>0$, $\grad K\neq 0$ everywhere and the level curves of $K$ are circles of $M^2$ with positive constant signed curvature
\begin{equation}\label{kfK}
\kappa=-\frac{\left|\grad K\right|}{4\left(K-1+f_K^2\right)},
\end{equation}
where $f_K$ is the positive solution of \eqref{polynomialEquation} for some non-zero real constant $c$. Then, we have

\begin{equation}\label{eq:f_K}
f_K^2=\frac{\kappa''+6\kappa\kappa'+4\kappa+4\kappa^3}{4\kappa},
\end{equation}
\begin{equation}\label{eq:c^2}
c^2=-\frac{2\sqrt{\kappa}\left(3\kappa''+14\kappa\kappa'+8\kappa+8\kappa^3\right)}{\left(\kappa''+6\kappa\kappa'+4\kappa+4\kappa^3\right)^{3/2}}
\end{equation}
and the signed curvature $\kappa$ satisfies the following $ODE$
\begin{equation}\label{eq:kappa2}
3\kappa\kappa'''+26\kappa^2\kappa''-3\kappa'\kappa''+72\kappa^3\kappa'+32\kappa^3+32\kappa^5=0.
\end{equation}
\end{theorem}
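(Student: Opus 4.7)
The plan is to work in the positively oriented local chart $X^K=X^K(u,s)$ furnished by Theorem \ref{thm:RelationBetweenKandk}, in which both $K$ and $\kappa$ depend only on $u$, and the identity $K=-\kappa^2-\kappa'$ holds. Since $\tilde{E}_1$ corresponds to $\partial/\partial u$, one has $|\grad K|=K'=-(2\kappa\kappa'+\kappa'')$. Translating the two intrinsic hypotheses to this chart, the polynomial equation \eqref{polynomialEquation} becomes
\[
1+\kappa^2+\kappa'=3f_K^2+c^2f_K^3,
\]
while the curvature formula \eqref{kfK}, combined with $K-1+f_K^2=-(2f_K^2+c^2f_K^3)$, gives
\[
8\kappa f_K^2+4\kappa c^2f_K^3=-(\kappa''+2\kappa\kappa').
\]

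I would then view these two relations as a $2\times 2$ linear system in the unknowns $f_K^2$ and $c^2f_K^3$. Eliminating $c^2f_K^3$ yields $4\kappa f_K^2=\kappa''+6\kappa\kappa'+4\kappa+4\kappa^3$, which is \eqref{eq:f_K}. Substituting back produces $c^2f_K^3=-(3\kappa''+14\kappa\kappa'+8\kappa+8\kappa^3)/(4\kappa)$, and dividing by $f_K^3=(f_K^2)^{3/2}$ gives \eqref{eq:c^2}.

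For the $ODE$, the crucial observation is that $c$ is a \emph{constant}. Setting $P=\kappa''+6\kappa\kappa'+4\kappa+4\kappa^3$ and $Q=3\kappa''+14\kappa\kappa'+8\kappa+8\kappa^3$, one has $f_K^2=P/(4\kappa)$ and $c^2f_K=-Q/P$. A short calculation records the key algebraic identity $2P-Q=-(\kappa''+2\kappa\kappa')$, equivalently $2+c^2f_K=-(\kappa''+2\kappa\kappa')/P$. Differentiating the polynomial relation once gives
\[
3f_Kf_K'\,(2+c^2f_K)=2\kappa\kappa'+\kappa'',
\]
and from $f_K^2=P/(4\kappa)$ one finds $f_Kf_K'=(P'\kappa-P\kappa')/(8\kappa^2)$. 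Plugging these in, the factor $\kappa''+2\kappa\kappa'$ appears on both sides; it cannot vanish, for otherwise $K'=0$, contradicting $\grad K\neq 0$. After cancellation one reaches the clean relation
\[
3P\kappa'-3P'\kappa-8\kappa^2 P=0.
\]

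The main obstacle is the bookkeeping in this last expansion: one substitutes the explicit forms of $P$ and $P'=\kappa'''+6(\kappa')^2+6\kappa\kappa''+4\kappa'+12\kappa^2\kappa'$ and collects terms by polynomial and derivative order in $\kappa$. I expect the $\kappa(\kappa')^2$ contributions and the $\kappa\kappa'$ contributions to cancel pairwise, so that after multiplication by $-1$ exactly \eqref{eq:kappa2} emerges. The algebraic shortcut $2P-Q=-(\kappa''+2\kappa\kappa')$ is what keeps the final step manageable, since it lets the factor $K'=-(\kappa''+2\kappa\kappa')$ peel off both sides before the expansion.
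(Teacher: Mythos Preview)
Your proof is correct and follows essentially the same route as the paper: work in the chart $X^K$ from Theorem~\ref{thm:RelationBetweenKandk}, use $K=-\kappa^2-\kappa'$ to rewrite \eqref{kfK} and \eqref{polynomialEquation} in terms of $\kappa$ and its derivatives, solve for $f_K^2$ and $c^2$, and then differentiate the constancy of $c^2$ (equivalently, the polynomial relation) to obtain the third-order ODE. The paper solves directly for $f_K^2$ from \eqref{kfK} and then reads off $c^2$ from \eqref{polynomialEquation}, arriving at the same formulas, and its key differentiated relation $3f_Kf_K'(1-K-f_K^2)+K'f_K^2=0$ is, after using $1-K-f_K^2=f_K^2(2+c^2f_K)$, exactly your $3f_Kf_K'(2+c^2f_K)=\kappa''+2\kappa\kappa'$.

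The one genuine organizational difference is your introduction of $P$, $Q$ and the identity $2P-Q=-(\kappa''+2\kappa\kappa')$, which lets you factor out $K'$ before expanding and reduces the final step to the single clean relation $3P\kappa'-3P'\kappa-8\kappa^2P=0$; the paper instead substitutes all four expressions (for $K$, $K'$, $f_K^2$, $f_Kf_K'$) into the differentiated relation and expands directly. Your shortcut is a tidy bookkeeping device, but the mathematical content is the same.
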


\begin{proof}
Let $X^K=X^K(u,s)$ be the positively oriented local chart defined in Theorem \ref{thm:RelationBetweenKandk}. It is easy to see that from \eqref{kfK} we get
\begin{equation}\label{eq:fK1}
f_K^2=\frac{-K'-4(K-1)\kappa}{4\kappa},
\end{equation}
and, from \eqref{eq:K-kappa-kappa'} we have 
\begin{equation}\label{eq:K'}
K'=-2\kappa\kappa'-\kappa''.
\end{equation}
Now, by combining \eqref{eq:fK1} and \eqref{eq:K'}, we obtain \eqref{eq:f_K}. 

Further, since $f_K$ is the positive solution of \eqref{polynomialEquation}, we get 
\begin{equation}\label{eq:c^2-proof}
c^2=\frac{1-3f_K^2-K}{f_K^3}.
\end{equation} 
On the one hand, using the expressions of $f_K$ from \eqref{eq:f_K} and of $K$ from \eqref{eq:K-kappa-kappa'}, we can infer that \eqref{eq:c^2-proof} is equivalent to \eqref{eq:c^2}.

On the other hand, taking the derivatives of \eqref{eq:c^2-proof} and of \eqref{eq:f_K}, one gets
\begin{equation}\label{eq:firstderivative-c^2}
3f_Kf_K'\left(1-K-f_K^2\right)+K'f_K^2=0
\end{equation}
and
\begin{equation}\label{eq:firstderivative-f_K^2}
f_Kf_K'=\frac{\kappa\kappa'''-\kappa'\kappa''+6\kappa^2\kappa''+8\kappa^3\kappa'}{8\kappa^2},
\end{equation}
respectively.

Finally, using \eqref{eq:K-kappa-kappa'}, \eqref{eq:f_K}, \eqref{eq:K'}, and \eqref{eq:firstderivative-f_K^2} in \eqref{eq:firstderivative-c^2}, we obtain \eqref{eq:kappa2}.
\end{proof}

\begin{remark}
We note that, since $X^K(u,s)=X^f(-u,-s)$, if we change $u$ by $-u$, then from \eqref{eq:kappa2} we recover \eqref{eq:kappa1}. If $\kappa=\kappa(u)$ is a solution of \eqref{eq:kappa2}, then the set of points where $\kappa'(u)\neq 0$ is open and dense in the domain of $\kappa$. Moreover, as we have seen in the proof of Theorem \ref{thm:RelationBetweenKandk}, the function $\theta$ from the expression of the metric $g$ is determined up to a positive multiplicative constant. By an appropriate change of coordinates, taking into account that the solutions of \eqref{eq:kappa2} are invariant under translations of the argument, we can always assume that the constant is one. Thus, we can assume
$$
\theta(u)=e^{\int_0^u\kappa(\tau)\  d\tau}.
$$
%Moreover, let $\kappa_i=\kappa_i(u_i)$, $u_i\in I_i$, $i=1,2$, be two solutions of \eqref{eq:kappa2}, where $I_i$ is an open interval of $\mathbb{R}$. If we assume that $\kappa_1(I_1)=\kappa_2(I_2)$ and $\kappa_1(u_1)=\kappa_2(u_2(u_1))$, then $u_2'=1$.
\end{remark}

Now, we are ready to obtain in a constructive way the existence result for $PNMC$ biconservative immersions in $\mathbb{S}^4$. This is a kind of a converse of Theorem \ref{thm:RelationBetweenkandf_K}. Starting with a solution of \eqref{eq:kappa2}, we will define locally a metric that satisfies all the required properties in order to ensure the existence of a $PNMC$ biconservative immersion in $\mathbb{S}^4$. To get the solution of \eqref{eq:kappa2} we must consider certain appropriate initial conditions. 

\begin{theorem}\label{thm-existence}
Consider the third order ODE \eqref{eq:kappa2} and the initial conditions
\begin{equation}\label{initialConditions1}
\left\{
\begin{array}{lll}
\kappa(0)=\kappa_0 \\
\kappa'(0)=\kappa'_0 \\
\kappa''(0)=\kappa''_0
\end{array}
\right.
\end{equation} 	
such that they satisfy
\begin{equation}\label{intialConditions2}
\left\{
\begin{array}{lll}
\kappa_0>0 \\
\kappa'_0>-1-\kappa^2_0 \\
-4\kappa_0-6\kappa_0\kappa'_0-4\kappa^3_0<\kappa''_0<\frac{1}{3}\left(-8\kappa_0-14\kappa_0\kappa'_0-8\kappa^3_0\right).
\end{array}
\right.
\end{equation} 	
Let $\kappa=\kappa(u)$ be the solution of \eqref{eq:kappa2} and \eqref{initialConditions1} and assume that
\begin{equation}\label{intialConditions3}
\left\{
\begin{array}{lll}
\kappa(u)>0 \\
\kappa'(u)>-1-\kappa^2(u) \\
-4\kappa(u)-6\kappa(u)\kappa'(u)-4\kappa^3(u)<\kappa''(u)<\frac{1}{3}\left(-8\kappa(u)-14\kappa(u)\kappa'(u)-8\kappa^3(u)\right), \qquad \forall u.
\end{array}
\right.
\end{equation} 	
Define
\begin{equation}\label{definition-theta}
\theta(u)=e^{\int_0^u\kappa(\tau)\  d\tau}
\end{equation}
and
\begin{equation}\label{definition-g(u,s)}
g(u,s)=du^2+\theta^2(u)ds^2.
\end{equation}
Then, the metric $g$ satisfies
\begin{itemize}
\item [(i)] $K(u)=-\kappa^2(u)-\kappa'(u)$;

\item[(ii)] $1-K>0$;

\item [(iii)] $\grad K \neq 0$ everywhere;

\item [(iv)]
$$
c^2=-\frac{2\sqrt{\kappa}\left(3\kappa''+14\kappa\kappa'+8\kappa+8\kappa^3\right)}{\left(\kappa''+6\kappa\kappa'+4\kappa+4\kappa^3\right)^{3/2}}
$$
is a positive constant;

\item [(v)]
$$
f_K^2=\frac{\kappa''+6\kappa\kappa'+4\kappa+4\kappa^3}{4\kappa}
$$
is positive and satisfies
\begin{equation}\label{formula-K}
K=1-3f_K^2-c^2f_K^3,
\end{equation}
where $c^2$ is given at (iv);

\item[(vi)] the Levi-Civita connection of $M^2$ is given by
$$
\nabla_{\tilde{E}_1}\tilde{E}_1=\nabla_{\tilde{E}_1}\tilde{E}_2=0, \quad \nabla_{\tilde{E}_2}\tilde{E}_1=\kappa\tilde{E}_2, \quad \nabla_{\tilde{E}_2}\tilde{E}_2=-\kappa\tilde{E}_1,
$$
where
$$
\tilde{E}_1=\frac{\grad K}{\left|\grad K\right|} \qquad \text{and} \qquad \tilde{E}_2\in C\left(TM^2\right)
$$
form a positively oriented global orthonormal frame field on $M^2$.
\end{itemize}
\end{theorem}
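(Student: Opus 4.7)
The plan relies on the fact that the metric \eqref{definition-g(u,s)} is a standard warped (rotationally symmetric) product, so its geometric invariants can all be read off directly from $\theta$. In particular, $K = -\theta''/\theta$ in general for a metric of the form $du^{2}+\theta^{2}(u)\,ds^{2}$.

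For (i)--(iii), I would combine this formula with $\theta'(u) = \kappa(u)\theta(u)$ (immediate from \eqref{definition-theta}), differentiate once more to obtain $\theta''(u) = (\kappa'(u)+\kappa^{2}(u))\theta(u)$, and read off $K(u) = -\kappa^{2}(u)-\kappa'(u)$. Then (ii) is exactly the second inequality in \eqref{intialConditions3}. For (iii), I would observe the algebraic identity
\[
\bigl(3\kappa''+14\kappa\kappa'+8\kappa+8\kappa^{3}\bigr)
-2\bigl(\kappa''+6\kappa\kappa'+4\kappa+4\kappa^{3}\bigr)
=\kappa''+2\kappa\kappa',
\]
so that the first and third inequalities of \eqref{intialConditions3} force $\kappa''+2\kappa\kappa'<0$. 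Since $K=K(u)$ and $g_{11}=1$, we get $\grad K = K'(u)\,\partial_{u} = -(\kappa''+2\kappa\kappa')\,\partial_{u}$, which is nowhere zero (and in fact $K'(u)>0$).

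Items (iv) and (v) split into a positivity part and an identity part. The signs in (iv) and (v) are immediate from \eqref{intialConditions3}: the denominator $\kappa''+6\kappa\kappa'+4\kappa+4\kappa^{3}$ in both expressions is positive by the first inequality, the numerator factor $3\kappa''+14\kappa\kappa'+8\kappa+8\kappa^{3}$ in (iv) is negative by the third, and $\kappa>0$, which forces $c^{2}>0$ and $f_{K}^{2}>0$. The algebraic identity \eqref{formula-K} is then a direct substitution of the formulas for $K$, $f_{K}^{2}$, and $c^{2}$ that does not invoke the ODE. The real work in (iv) is that $c^{2}$ must be \emph{constant} in $u$, and this is precisely where \eqref{eq:kappa2} enters. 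I would verify it by differentiating the expression for $c^{2}$ in (iv) and checking that the vanishing condition, after clearing denominators, is exactly \eqref{eq:kappa2}. This is the converse of the manipulation carried out in the proof of Theorem \ref{thm:RelationBetweenkandf_K}, so I would invoke that computation backwards; this bookkeeping is the main (though purely computational) obstacle.

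Finally, for (vi) I would perform the standard Christoffel-symbol computation for $g = du^{2}+\theta^{2}\,ds^{2}$ in the orthonormal frame $\tilde{E}_{1}=\partial_{u}$, $\tilde{E}_{2}=\theta^{-1}\partial_{s}$, obtaining $\nabla_{\partial_{u}}\partial_{u}=0$, $\nabla_{\partial_{u}}\partial_{s}=(\theta'/\theta)\,\partial_{s}$, and $\nabla_{\partial_{s}}\partial_{s}=-\theta\theta'\,\partial_{u}$, which translate via $\theta'/\theta=\kappa$ into the four identities of (vi). To match the statement, it remains to check that this $\tilde{E}_{1}$ coincides with $\grad K/|\grad K|$; but $K=K(u)$ with $K'>0$ (by step (iii)) forces $\grad K/|\grad K| = \partial_{u}$, closing the argument.
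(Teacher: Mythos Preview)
Your proposal is correct and follows essentially the same route as the paper: compute $K=-\theta''/\theta=-\kappa^2-\kappa'$, read off (ii) from the second inequality in \eqref{intialConditions3}, use the inequalities on $\kappa''$ for the positivity of $f_K^2$ and $c^2$, reduce the constancy of $c^2$ to the ODE \eqref{eq:kappa2}, and compute the Levi--Civita connection of the warped metric with $\tilde E_1=\partial_u=\grad K/|\grad K|$. The only noteworthy variation is in (iii): the paper derives $\kappa''+2\kappa\kappa'<0$ by combining the upper bound on $\kappa''$ with the inequality $\kappa'>-1-\kappa^2$, whereas your identity $(3\kappa''+14\kappa\kappa'+8\kappa+8\kappa^3)-2(\kappa''+6\kappa\kappa'+4\kappa+4\kappa^3)=\kappa''+2\kappa\kappa'$ gets the same conclusion directly from the two-sided bound on $\kappa''$; both are equally valid and equally short.
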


\begin{proof}
In order to prove $(i)$--$(iii)$, we first use the expression \eqref{definition-g(u,s)} of the metric $g=g(u,s)$ and, by standard computation, we deduce that 
$$
K(u)=-\frac{\theta''(u)}{\theta(u)}.
$$
Then, using \eqref{definition-theta}, we get
\begin{equation}\label{eq:K-proof}
K(u)=-\kappa^2(u)-\kappa'(u).
\end{equation}  
Now, since $1-K=1+\kappa^2+\kappa'$, from the second inequality of \eqref{intialConditions3}, it is clear that $1-K>0$.

To prove that $\grad K \neq 0$ at any point, we note that 
$$
\grad K=K'X^K_u.
$$
Next, we can see that $K'$ is positive. Indeed, if we take the derivative of \eqref{eq:K-proof}, we obtain that $K'>0$ if and only if 
\begin{equation}\label{inequality-k''}
\kappa''<-2\kappa\kappa'.
\end{equation}
But, from the last inequality of \eqref{intialConditions3}, we know that
$$
\kappa''<\frac{1}{3}\left(-8\kappa-14\kappa\kappa'-8\kappa^3\right),
$$
thus, in order to prove \eqref{inequality-k''}, it is enough to show that 
$$
\frac{1}{3}\left(-8\kappa-14\kappa\kappa'-8\kappa^3\right)<-2\kappa\kappa'.
$$
Furthermore, the last inequality is equivalent to
$$
\kappa'>-1-\kappa^2,
$$
which is true, according to the second inequality of \eqref{intialConditions3}. Therefore $K'>0$.

In order to prove $(iv)$ and $(v)$, we set 
$$
c^2=-\frac{2\sqrt{\kappa}\left(3\kappa''+14\kappa\kappa'+8\kappa+8\kappa^3\right)}{\left(\kappa''+6\kappa\kappa'+4\kappa+4\kappa^3\right)^{3/2}}
$$
and
$$
f_K^2=\frac{\kappa''+6\kappa\kappa'+4\kappa+4\kappa^3}{4\kappa}.
$$
From the first and the last inequalities of \eqref{intialConditions3}, we see that the above two quantities are positive.

Since $K$ satisfies \eqref{eq:K-proof}, by some straightforward computations, we can rewrite $c^2$ as
\begin{equation}\label{formula-c^2}
c^2=\frac{1-3f_K^2-K}{f_K^3}
\end{equation}
and thus \eqref{formula-K} holds.

We still have to show that the quantity from the right hand side of \eqref{formula-c^2} is constant, i.e., its derivative vanishes. Indeed, we first note that
$$
\left(\frac{1-3f_K^2-K}{f_K^3}\right)'=0
$$
if and only if
\begin{equation}\label{formula-equivalent-derivative-zero}
3f_Kf_K'\left(1-K-f_K^2\right)+K'f_K^2=0.
\end{equation}
Since $\kappa\neq 0$, using the definition of $f_K^2$ and relation \eqref{eq:K-proof}, we have that \eqref{formula-equivalent-derivative-zero} is equivalent to
\begin{equation}\label{formula-equvialent-derivative-zero-2}
\left(2\kappa\kappa'+\kappa''\right)\left(3\kappa\kappa'''+26\kappa^2\kappa''-3\kappa'\kappa''+72\kappa^3\kappa'+32\kappa^3+32\kappa^5\right)=0.
\end{equation}
Furthermore, from \eqref{inequality-k''}, equation \eqref{formula-equvialent-derivative-zero-2} is equivalent to the vanishing of its second parenthesis. Finally, we note that 
$$
3\kappa\kappa'''+26\kappa^2\kappa''-3\kappa'\kappa''+72\kappa^3\kappa'+32\kappa^3+32\kappa^5=0
$$
is precisely equation \eqref{eq:kappa2} that $\kappa$ satisfies.

Therefore, the quantity denoted by $c^2$ is a positive constant.

Further, to prove the last item, we consider the global vector field $\tilde{E}_1=\grad K/\left|\grad K\right|$. Since $g(u,s)=du^2+\theta^2(u)ds^2$ and $\grad K=K'(u)X^K_u$, it follows that 
$$
\tilde{E}_1=X^K_u. 
$$
Let 
$$
\tilde{E}_2=\frac{1}{\theta(u)}X^K_s\in C\left(TM^2\right).
$$
Clearly, $\left\{\tilde{E}_1,\tilde{E}_2\right\}$ form a positively oriented global orthonormal frame field on $M^2$. 

Using the definition of $\theta$, it is easy to see that $$
\kappa(u)=\frac{\theta'(u)}{\theta(u)}
$$ 
and, expressing the Christoffel symbols in terms of the metric $g$, by standard computations we reach the Levi-Civita connection on $M^2$
$$
\nabla_{\tilde{E}_1}\tilde{E}_1=\nabla_{\tilde{E}_1}\tilde{E}_2=0, \quad \nabla_{\tilde{E}_2}\tilde{E}_1=\kappa\tilde{E}_2, \quad \nabla_{\tilde{E}_2}\tilde{E}_2=-\kappa\tilde{E}_1.
$$
\end{proof}

\begin{remark}
It is not difficult to find examples of triplets $\left(\kappa_0,\kappa_0',\kappa_0''\right)$ that satisfy \eqref{intialConditions2}.
\end{remark}

In the following, from a \textit{local} point of view (in our context of $PNMC$ biconservative surfaces in $\mathbb{S}^4$), we will show that all non-isometric abstract surfaces $\left(M^2,g\right)$ that admit a $PNMC$ biconservative immersion in $\mathbb{S}^4$ are determined by the solutions $\kappa=\kappa(u)$ of \eqref{eq:kappa2}.

Indeed, let $\left(M^2_1,g_1\right)$ and $\left(M^2_2,g_2\right)$ be two abstract surfaces given by
$$
g_1\left(u_1,s_1\right)=du^2_1+\theta^2_1\left(u_1\right)ds^2_1, \quad \theta_1(0)=1,
$$
where $\kappa_1=\kappa_1\left(u_1\right)$ is a solution of \eqref{eq:kappa2} and
$$
g_2\left(u_2,s_2\right)=du^2_2+\theta^2_2\left(u_2\right)ds^2_2, \quad \theta_2(0)=1,
$$
where $\kappa_2=\kappa_2\left(u_2\right)$ is also solution of \eqref{eq:kappa2}. We can give the next result.

\begin{proposition}
If there exists an isometry $\Psi:\left(M^2_1,g_1\right)\to\left(M^2_2,g_2\right)$, then
\begin{itemize}
\item[(i)] $\Psi\left(u_1,s_1\right)=\left(u_1+\alpha,\beta s_1+\gamma\right)$, where $\alpha$ and $\gamma$ are some real constants and $\beta=\theta_1(-\alpha)=1/\theta_2(\alpha)$;

\item [(ii)] $\theta_2\left(u_1+\alpha\right)=\theta_1\left(u_1\right)/\beta$;

\item[(iii)] $\kappa_2\left(u_1+\alpha\right)=\kappa_1\left(u_1\right)$.
\end{itemize}
\end{proposition}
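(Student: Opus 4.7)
The plan is to exploit the isometric invariance of the Gaussian curvature and of the intrinsic gradient frame $\tilde{E}_1 = \grad K/|\grad K|$ attached to each surface, and then to pin down $\Psi$ by reading off what the isometry condition forces in the given warped-product coordinates.

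First I would observe that an isometry preserves Gaussian curvature, so $K_1(u_1) = (K_2 \circ \Psi)(u_1,s_1)$. From the proof of Theorem \ref{thm-existence}(iii) we know that $K_2' > 0$, so $K_2$ is a strictly monotonic function of $u_2$ alone. Writing $\Psi(u_1,s_1) = (U(u_1,s_1), S(u_1,s_1))$, the identity $K_1(u_1) = K_2(U(u_1,s_1))$ together with strict monotonicity of $K_2$ forces $U$ to depend only on $u_1$, with $U'(u_1) \neq 0$. In addition, since $\Psi$ is an isometry, $\Psi_* \grad_1 K_1 = \grad_2 K_2 \circ \Psi$ and both sides have the same norm; because $\grad_i K_i$ is a positive multiple of $\partial_{u_i}$, this yields $\Psi_* \partial_{u_1} = \partial_{u_2}\circ \Psi$. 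Written in coordinates this reads $U'(u_1) = 1$ and $\partial_{u_1} S = 0$, so $U(u_1) = u_1 + \alpha$ for a constant $\alpha$, and $S = S(s_1)$.

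Next I would use the isometry condition in the $s$-direction. Pushing forward gives $\Psi_* \partial_{s_1} = S'(s_1)\, \partial_{s_2}$, and equating $g_1(\partial_{s_1}, \partial_{s_1})$ with $g_2(\Psi_* \partial_{s_1}, \Psi_* \partial_{s_1})$ produces
\[
\theta_1^2(u_1) = \left(S'(s_1)\right)^2 \theta_2^2(u_1 + \alpha).
\]
Since the left side depends only on $u_1$ and (after dividing) the right side only on $s_1$, both are equal to a positive constant $\beta^2$, with $\beta > 0$ determined by the orientation convention on $\tilde{E}_2$. Hence $S(s_1) = \beta s_1 + \gamma$ and $\theta_1(u_1) = \beta\, \theta_2(u_1 + \alpha)$, which is precisely (ii). Evaluating this last relation at $u_1 = 0$ (using $\theta_1(0) = 1$) and at $u_1 = -\alpha$ (using $\theta_2(0) = 1$) gives the two expressions $\beta = 1/\theta_2(\alpha) = \theta_1(-\alpha)$ announced in (i). Finally, (iii) follows by taking the logarithmic derivative of the identity in (ii): since $\theta_i'/\theta_i = \kappa_i$ by the definition of $\theta_i$, differentiating $\log \theta_1(u_1) = \log \beta + \log \theta_2(u_1 + \alpha)$ immediately yields $\kappa_1(u_1) = \kappa_2(u_1 + \alpha)$.

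The only real subtlety I anticipate is bookkeeping on orientation: if $\Psi$ were orientation-reversing, then $S$ would have negative slope and one would replace $\beta$ by $|\beta|$ throughout. The statement as written assumes $\Psi$ respects the positively oriented frames $\{\tilde{E}_1,\tilde{E}_2\}$ used to define $\kappa$ with a definite sign, which matches the convention of the paper; everything else is essentially forced by monotonicity of $K$ and by the warped-product form of the metrics.
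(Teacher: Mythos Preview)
Your proof is correct and follows essentially the same architecture as the paper's: use curvature invariance and the strict monotonicity of $K_2$ to force the first component of $\Psi$ to depend only on $u_1$, then pin down both components from the isometry equations, and finally take a logarithmic derivative for (iii). The one minor difference is that where the paper writes out the three pullback equations $\Psi^{\ast}g_2=g_1$ componentwise and solves them, you instead invoke the naturality of $\grad K/|\grad K|$ under isometries to obtain $\Psi_{\ast}\partial_{u_1}=\partial_{u_2}$ directly; this is a clean shortcut but not a genuinely different route, and your closing remark on the orientation convention for $\beta$ matches the paper's implicit assumption.
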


\begin{proof}
Let us consider 
$$
\Psi\left(u_1,s_1\right)=\left(\Psi^1\left(u_1,s_1\right), \Psi^2\left(u_1,s_1\right)\right)	
$$
an isometry between $\left(M^2_1,g_1\right)$ and $\left(M^2_2,g_2\right)$, i.e., $\Psi^\ast g_2=g_1$. So, the following relations hold
\begin{equation}\label{eq1-isometry}
\left(\frac{\partial \Psi^1}{\partial u_1}\right)^2\left(u_1,s_1\right)+\left(\frac{\partial \Psi^2}{\partial u_1}\right)^2\left(u_1,s_1\right)\theta^2_2\left(\Psi^1\left(u_1,s_1\right)\right)=1,
\end{equation}
\begin{equation}\label{eq2-isometry}
\frac{\partial \Psi^1}{\partial u_1}\left(u_1,s_1\right) \frac{\partial \Psi^1}{\partial s_1}\left(u_1,s_1\right)+\frac{\partial \Psi^2}{\partial u_1}\left(u_1,s_1\right) \frac{\partial \Psi^2}{\partial s_1}\left(u_1,s_1\right)\theta^2_2\left(\Psi^1\left(u_1,s_1\right)\right)=0
\end{equation}
and
\begin{equation}\label{eq3-isometry}
\left(\frac{\partial \Psi^1}{\partial s_1}\right)^2\left(u_1,s_1\right)+\left(\frac{\partial \Psi^2}{\partial s_1}\right)^2\left(u_1,s_1\right)\theta^2_2\left(\Psi^1\left(u_1,s_1\right)\right)=\theta_1^2\left(u_1\right).
\end{equation}
Moreover, we also know that
\begin{equation}\label{eq4-isometry}
K_2\left(\Psi^1\left( u_1,s_1 \right)\right)=K_1\left(u_1\right).
\end{equation}
If we take the derivative of \eqref{eq4-isometry} with respect to $s_1$, since $K_2'\neq 0$, we easily obtain that $\Psi^1=\Psi^1\left(u_1\right)$. Further, knowing that the function $\Psi^1$ depends only on $u_1$, from \eqref{eq2-isometry}, we get
\begin{equation}\label{isom1}
\frac{\partial \Psi^2}{\partial u_1}\left(u_1,s_1\right) \frac{\partial \Psi^2}{\partial s_1}\left(u_1,s_1\right)=0,
\end{equation}
and, from \eqref{eq3-isometry}, we have
\begin{equation}\label{isom2}
\left(\frac{\partial \Psi^2}{\partial s_1}\right)^2\left(u_1,s_1\right)\theta^2_2\left(\Psi^1\left(u_1\right)\right)=\theta_1^2\left(u_1\right).
\end{equation}
Now, as $\theta_1>0$, from \eqref{isom1} and \eqref{isom2}, one obtains that $\Psi^2=\Psi^2\left(s_1\right)$, and then, from \eqref{eq1-isometry}, we have
$$
\Psi^1\left(u_1\right)=\pm u_1+\alpha, \qquad \alpha\in\mathbb{R}.
$$
Furthermore, taking the derivative of \eqref{eq4-isometry} with respect to $u_1$, as $K_1'$ and $K_2'$ are positive, it follows that $\left(\Psi^{1}\right)'>0$, so
$$
\Psi^1\left(u_1\right)=u_1+\alpha, \qquad \alpha\in\mathbb{R}.
$$
From \eqref{eq3-isometry}, we obtain
$$
\left(\left(\Psi^{2}\right)'\left(s_1\right)\right)^2=\frac{\theta_1^2\left(u_1\right)}{\theta_2^2\left(\Psi^1\left(u_1\right)\right)}.
$$
By taking the derivative of the above expression with respect to $s_1$, we get that  $\left(\Psi^{2}\right)'$ is constant and so
$$
\Psi^2\left(s_1\right)=\beta s_1+\gamma, \qquad \gamma\in\mathbb{R},
$$
where $\beta$ is a positive constant given by
\begin{equation}\label{eq-beta}
\beta=\frac{\theta_1\left(u_1\right)}{\theta_2\left(u_1+\alpha\right)}.
\end{equation}
In particular, using the assumptions that $\theta_1(0)=\theta_2(0)=1$, we get
$$
\beta=\theta_1(-\alpha)=\frac{1}{\theta_2(\alpha)}.
$$
Thus, $(i)$ and $(ii)$ are proved.

Finally, from \eqref{eq:kappa-theta'-theta} and \eqref{eq-beta},
%$$
%\theta_2\left(u_1+\alpha\right)=\frac{1}{\beta}\theta_1\left(u_1\right),
%$$
we can conclude that
$$
\kappa_2\left(u_1+\alpha\right)=\kappa_1\left(u_1\right).
$$

\end{proof}

Conversely, by a standard argument, we have 

\begin{proposition}
Let $\left(u_0;\kappa_0, \kappa_0', \kappa_0''\right)$ and $\left(v_0;\kappa_0, \kappa_0', \kappa_0''\right)$ be two sets of initial conditions for \eqref{eq:kappa2} such that \eqref {initialConditions1} holds. Denote by $\kappa_1=\kappa_1\left(u_1\right)$ and $\kappa_2=\kappa_2\left(u_2\right)$ the corresponding solutions. Let $g_1=g_1\left(u_1,s_1\right)$ and $g_2=g_2\left(u_2,s_2\right)$ be the associated metrics, respectively. Then, 
$$
\Psi\left(u_1,s_1\right)=\left(u_1-\left(u_0-v_0\right),s_1\right)
$$
is an isometry, that is $\Psi^\ast g_2=g_1$.
\end{proposition}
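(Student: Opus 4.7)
The plan is to exploit the autonomy of the ODE \eqref{eq:kappa2}. I would first show that $\kappa_2$ is a translate of $\kappa_1$ via the uniqueness theorem for Cauchy problems, then propagate this identification to the warping factors $\theta_i$, and finally verify that $\Psi$ realizes the isometry by a direct pullback computation.

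Concretely, since \eqref{eq:kappa2} does not contain the independent variable explicitly, the translated function $\tilde{\kappa}(u) := \kappa_1(u + (u_0 - v_0))$ solves \eqref{eq:kappa2} and at $u = v_0$ carries exactly the values $\kappa_0, \kappa_0', \kappa_0''$ prescribed for $\kappa_2$. Rewriting \eqref{eq:kappa2} as a first-order system in $(\kappa, \kappa', \kappa'')$ is legitimate because $\kappa_0 > 0$ keeps the leading factor $3\kappa$ away from zero near the initial point, so the vector field is smooth and the standard uniqueness theorem for ODEs gives
\begin{equation*}
\kappa_2(u) = \kappa_1\bigl(u + (u_0 - v_0)\bigr)
\end{equation*}
on a common domain.

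Next I would translate this into a relation between $\theta_1$ and $\theta_2$. The substitution $\sigma = \tau + (u_0 - v_0)$ in the integral defining $\theta_2$ yields $\int_0^{u - (u_0 - v_0)} \kappa_2(\tau)\, d\tau = \int_{u_0 - v_0}^{u} \kappa_1(\sigma)\, d\sigma$, hence $\theta_2\bigl(u - (u_0 - v_0)\bigr) = C\,\theta_1(u)$ with the positive constant $C := \exp\bigl(-\int_0^{u_0 - v_0} \kappa_1\bigr)$. Since $\Psi^1(u_1,s_1) = u_1 - (u_0 - v_0)$ and $\Psi^2(u_1,s_1) = s_1$ give $d\Psi^1 = du_1$ and $d\Psi^2 = ds_1$, the pullback reads
\begin{equation*}
\Psi^\ast g_2 = du_1^2 + \theta_2^2\bigl(u_1 - (u_0 - v_0)\bigr)\, ds_1^2 = du_1^2 + C^2 \theta_1^2(u_1)\, ds_1^2,
\end{equation*}
which equals $g_1$ exactly when $C = 1$.

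The factor $C$ is the only real obstacle, and it is precisely the multiplicative normalization freedom in $\theta$ highlighted in the remark preceding Theorem \ref{thm-existence}: the warping factor attached to a given $\kappa$ is determined only up to a positive scalar, which can be absorbed into a rescaling of the $s$-coordinate. I would handle this by choosing the base point in the defining integral of each $\theta_i$ to coincide with the initial point of the corresponding Cauchy problem, so that $\theta_1(u_0) = \theta_2(v_0) = 1$; equivalently, one could compose $\Psi$ with the linear rescaling $s_1 \mapsto C s_1$. Either way the multiplicative constant is eliminated, and $\Psi^\ast g_2 = g_1$ follows.
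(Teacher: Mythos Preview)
Your argument is correct and is exactly the ``standard argument'' the paper invokes; in fact the paper gives no proof at all beyond that phrase, so there is nothing to compare against. Your observation about the residual constant $C$ is well taken: with the convention $\theta_i(u)=\exp\bigl(\int_0^u\kappa_i\bigr)$ fixed by the paper, the map $\Psi$ as written pulls $g_2$ back to $du_1^2+C^2\theta_1^2(u_1)\,ds_1^2$, and $C=1$ only after the normalization you describe (or after absorbing the scaling into the $s$-coordinate). This is precisely the multiplicative freedom the paper flags in the remark preceding Theorem~\ref{thm-existence}, and the authors are tacitly working modulo it; your handling of this point is more careful than the paper's.
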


From the above two results we can conclude that

\begin{corollary}
From a local point of view, we have a $3$-parameter family of non-isometric abstract surfaces $\left(M^2,g\right)$ that admit a $PNMC$ biconservative immersion in $\mathbb{S}^4$, indexed by $\kappa_0$, $\kappa_0'$ and $\kappa_0''$ that obey the initial conditions \eqref{intialConditions2}.
\end{corollary}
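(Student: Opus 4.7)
The plan is to assemble this corollary as a direct consequence of Theorems \ref{thm:intrinsicCharacterization} and \ref{thm-existence} combined with the two immediately preceding propositions. Together, these results supply both existence and the correspondence between admissible triples and isometry classes, so the proof reduces to organizational work rather than any new computation.

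First, I would use Theorem \ref{thm-existence} as the construction engine. Given any triple $(\kappa_0,\kappa_0',\kappa_0'')$ in the open region of $\mathbb{R}^3$ described by \eqref{intialConditions2}, I would let $\kappa=\kappa(u)$ be the corresponding local solution of \eqref{eq:kappa2} and form the metric
$$
g(u,s)=du^2+\theta^2(u)\,ds^2,\qquad \theta(u)=e^{\int_0^u\kappa(\tau)\,d\tau}.
$$
Properties (i)--(vi) of Theorem \ref{thm-existence} then show that $(M^2,g)$ meets every hypothesis of Theorem \ref{thm:intrinsicCharacterization}, which in turn produces the desired (locally unique) $PNMC$ biconservative embedding into $\mathbb{S}^4$. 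This establishes surjectivity of the assignment $(\kappa_0,\kappa_0',\kappa_0'')\mapsto (M^2,g)$ onto the class of $PNMC$ biconservative abstract surfaces produced from a solution of \eqref{eq:kappa2}.

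Next, I would invoke the first of the two immediately preceding propositions to handle the non-isometric count. That proposition reduces any isometry between two such metrics to a translation of the form $\kappa_2(u_1+\alpha)=\kappa_1(u_1)$, so if one agrees to read the initial triple at $u=0$ for every solution, then distinct triples $(\kappa_0,\kappa_0',\kappa_0'')$ sitting at the origin produce genuinely distinct surfaces up to isometry. Conversely, the second proposition supplies the explicit isometry $\Psi(u_1,s_1)=(u_1-(u_0-v_0),s_1)$ that realizes the translation equivalence, so the two directions match tightly and no admissible triple is counted twice.

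Combining these steps, the open subset of $\mathbb{R}^3$ described by \eqref{intialConditions2} parametrizes, via the map above, a $3$-parameter family of non-isometric abstract surfaces, each admitting a local $PNMC$ biconservative embedding in $\mathbb{S}^4$. The entire argument is a direct assembly of the already-proved statements; the only mild subtlety is bookkeeping the equivalence relation induced by the autonomous ODE \eqref{eq:kappa2}, which the two propositions have already settled.
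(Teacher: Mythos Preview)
Your assembly is exactly what the paper does: it states the corollary without proof, prefaced only by ``From the above two results we can conclude that'', so the intended argument is precisely the combination of Theorem \ref{thm-existence}, Theorem \ref{thm:intrinsicCharacterization}, and the two propositions you cite.

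There is, however, one overstatement in your write-up. You claim that ``distinct triples $(\kappa_0,\kappa_0',\kappa_0'')$ sitting at the origin produce genuinely distinct surfaces up to isometry.'' The first proposition only tells you that an isometry forces $\kappa_2(u_1+\alpha)=\kappa_1(u_1)$ for some $\alpha$; it does \emph{not} force $\alpha=0$. Thus two different triples lying on the same integral curve of \eqref{eq:kappa2} (one at $u=0$, the other at $u=\alpha$) yield isometric surfaces by the second proposition, even though the triples differ. So the map from admissible triples to isometry classes is not injective, and the ``$3$-parameter family of non-isometric surfaces'' should be read with the qualifier ``from a local point of view'': near a given triple, the translation orbit is a single curve and the assignment is locally faithful. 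The paper itself signals this by immediately passing to the global $2$-parameter description in Proposition \ref{other-parameters}. Your argument is fine once you soften that one sentence to reflect that the three parameters really count surfaces only up to the translation action on solutions of \eqref{eq:kappa2}.
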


From a \textit{global} point of view (again, in our context of $PNMC$ biconservative surfaces in $\mathbb{S}^4$), we can perform a new change of coordinates such that we can see that the abstract surfaces $\left(M^2,g\right)$ that admit a $PNMC$ biconservative immersion in $\mathbb{S}^4$ form a family indexed by two parameters.

\begin{proposition}\label{other-parameters}
Let $\varphi:\left(M^2,g\right)\to \mathbb{S}^4$ be a $PNMC$ biconservative immersion. Then, around any point of $M^2$ there exists a positively oriented local chart $X^f=X^f(u,t)$ such that 
$$
E_1=X^f_u, \qquad E_2=f^{3/4}(u)X^f_t
$$
and
$$
g(u,t)=du^2+\frac{1}{f^{3/2}(u)}dt^2,
$$
where $f$ is the mean curvature function. Moreover, if we change the coordinates in an appropriate way, which allows a global point of view, the metric $g$ can be written as 
\begin{equation}\label{appropriate-coordinates}
g\left(f,t\right)=\frac{1}{2C^2f^{7/2}-\frac{16}{9}c^2f^5-16f^4-\frac{16}{9}f^2}df^2+\frac{1}{f^{3/2}}dt^2,
\end{equation}
where $C$ and $c$ are arbitrary non-zero real constants.
\end{proposition}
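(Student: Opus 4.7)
The strategy is to find coordinates in which the metric is diagonal, which requires replacing $E_2$ by a rescaling $F_2 = f^{-3/4} E_2$ that commutes with $E_1$. The previous chart $X^f(u,v)$ from Theorem \ref{thm:fundamentalProperties}(v) only satisfied $X^f_v = E_2$ while $X^f_u = E_1 + g_{12} E_2$ had an unknown cross term. The scaling power $3/4$ is not guessed but dictated by the connection: since Theorem \ref{thm:fundamentalProperties}(i) gives $[E_1,E_2] = \nabla_{E_1}E_2 - \nabla_{E_2}E_1 = \tfrac{3}{4}(f'/f)E_2$, any rescaling $\mu\, E_2$ with $E_1(\ln\mu) = -\tfrac{3}{4}(f'/f)$ will commute with $E_1$, and $\mu = f^{-3/4}$ is the natural choice. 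A one-line computation,
\begin{equation*}
[E_1,F_2] = -\tfrac{3}{4}f^{-7/4}f'\,E_2 + f^{-3/4}\cdot\tfrac{3}{4}\tfrac{f'}{f}E_2 = 0,
\end{equation*}
will confirm this.

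Once $[E_1, F_2] = 0$ is established, the standard theorem on simultaneous integration of commuting vector fields yields, around any chosen point $p_0$, a positively oriented local chart $X^f=X^f(u,t)$ with $X^f_u = E_1$ and $X^f_t = F_2 = f^{-3/4}E_2$. The metric coefficients then follow immediately from the orthonormality of $\{E_1,E_2\}$:
\begin{equation*}
g_{uu} = 1,\qquad g_{ut} = f^{-3/4}\langle E_1,E_2\rangle = 0,\qquad g_{tt} = f^{-3/2},
\end{equation*}
giving $g(u,t)=du^2 + f^{-3/2}(u)\,dt^2$. To see that $f=f(u)$ in this chart, I would note that $X^f_t f = f^{-3/4} E_2 f = 0$ by $(v)$ of Theorem \ref{thm:fundamentalProperties}, and $X^f_u f = E_1 f = f' > 0$, so $f$ depends solely on $u$.

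For the second (global) form of the metric, the point is that $f'>0$ makes $u \mapsto f(u)$ a smooth diffeomorphism, so we may use $f$ itself as the first coordinate. This is where the first integral \eqref{first-integral-f} enters: it gives
\begin{equation*}
(f')^2 = 2C^2 f^{7/2} - \tfrac{16}{9}c^2 f^5 - 16 f^4 - \tfrac{16}{9}f^2,
\end{equation*}
and since $du = df/f'$, we immediately obtain the claimed expression for $g(f,t)$ upon substitution. The constants $C\neq 0$ and $c\neq 0$ are precisely those appearing in Theorem \ref{thm:fundamentalProperties}(ii),(v).

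I do not expect a real obstacle: once the correct rescaling $F_2 = f^{-3/4}E_2$ is identified, everything reduces to routine checks of connection formulas and a change of variables. The only place requiring mild care is justifying that the change from $u$ to $f$ is globally well-defined on the chart in the last step; this is where the assumption (stated in the introduction's Conventions section) that $\grad f \neq 0$ everywhere, together with $f' = |\grad f| > 0$, is essential.
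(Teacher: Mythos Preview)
Your proposal is correct and follows essentially the same route as the paper: both identify the rescaling $F_2=f^{-3/4}E_2$ so that $[E_1,F_2]=0$, invoke the existence of a coordinate chart adapted to the commuting frame, read off the diagonal metric, and then use the first integral \eqref{first-integral-f} to pass from $u$ to $f$. The only cosmetic difference is that the paper first \emph{derives} the power $-3/4$ by solving $[E_1,hE_2]=0$ for a general $h$ with $E_2h=0$ (obtaining $h=A f^{-3/4}$ and then normalizing $A=1$), whereas you guess the exponent from the bracket formula and verify it directly; the content is the same. One tiny notational point: in your displayed bracket computation you write $f'$ before the chart $(u,t)$ has been set up, so strictly speaking you should write $E_1f$ there and only identify it with $f'(u)$ once $X^f_u=E_1$ is established.
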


\begin{proof}
Let $\left\{E_1,E_2\right\}$ be the positively oriented global vector field given in Theorem \ref{thm:fundamentalProperties} and let $h$ be a smooth positive function on $M^2$, such that $E_2h=0$. From Theorem \ref{thm:fundamentalProperties} we have that $\left[E_1,hE_2\right]=0$ if and only if 
$$
h=A\frac{1}{f^{3/4}}, 
$$
where $A$ is an arbitrary positive real constant.

Therefore, there exists a positively oriented local chart $X^f=X^f\left(\tilde{u}, t\right)$ such that
$$
E_1=X^f_{\tilde{u}} \qquad \text{and} \qquad E_2=\frac{1}{h}X^f_t.
$$
It is not difficult to check that up to a translation, we have $\tilde{u}=u$, where $u$ is the local coordinate from the chart $X^f=X^f(u,v)$ in Theorem \ref{thm:fundamentalProperties}. Thus, 
$$
g(u,t)=du^2+A^2\frac{1}{f^{3/2}(u)}dt^2.
$$
By an appropriate change of the coordinate $t$, we may assume that the constant $A$ is equal to $1$, and so, keeping the same notation for the second coordinate, we can rewrite the metric $g$ as
$$
g(u,t)=du^2+\frac{1}{f^{3/2}(u)}dt^2.
$$
Next, if we perform a new change of coordinates $(u,t)\to (f=f(u), t)$, taking into account that $f$ satisfies \eqref{first-integral-f}, we infer that the metric $g$ takes the following form
$$
g\left(f,t\right)=\frac{1}{2C^2f^{7/2}-\frac{16}{9}c^2f^5-16f^4-\frac{16}{9}f^2}df^2+\frac{1}{f^{3/2}}dt^2,
$$
where $C$ and $c$ are arbitrary non-zero real constants.
\end{proof}

In the end of this section we note that the tangent part of the biharmonic equation has proved to be not so restrictive. But, if we want to work with both parts of the biharmonic equation, the situation is very rigid. Indeed, we have 
 
\begin{proposition}
A $PNMC$ biconservative surface in $\mathbb{S}^4$ cannot be biharmonic. 
\end{proposition}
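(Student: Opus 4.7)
The plan is to project the biharmonic system \eqref{biharmonic-system} on the normal direction $E_4$ and show that this forces $c=0$, which contradicts the non-zero constant $c$ appearing in Theorem \ref{thm:fundamentalProperties}. Since biconservativity already takes care of the tangent equation, the obstruction must come from the first equation of \eqref{biharmonic-system}. The key observation is that the parallelism of the normalized mean curvature vector field makes $\Delta^\perp H$ collinear with $E_3$, so the $E_4$-component of the first equation reduces to a single algebraic condition involving $c$ and $f$.

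More precisely, I would first use $H=fE_3$ together with $\nabla^\perp E_3=0$ and $\nabla^\perp E_4=0$ from Theorem \ref{thm:fundamentalProperties} to compute
$$
\nabla^\perp_X H=(Xf)E_3, \qquad \forall X\in C(TM^2),
$$
so that $\Delta^\perp H=(\Delta f)E_3$ and therefore $\langle\Delta^\perp H,E_4\rangle=0$. Since also $\langle H,E_4\rangle=0$, the only non-trivial contribution to the $E_4$-component of the first equation of \eqref{biharmonic-system} comes from $\trace B(\cdot,A_H(\cdot))$.

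Next, since $A_H=fA_3$ has eigenvalues $-f^{2}$ and $3f^{2}$ with respect to $\{E_1,E_2\}$, I would compute $A_H(E_1)=-f^{2}E_1$ and $A_H(E_2)=3f^{2}E_2$, and use the explicit form of $A_3$ and $A_4$ to write
$$
B(E_1,E_1)=-fE_3+cf^{3/2}E_4, \qquad B(E_2,E_2)=3fE_3-cf^{3/2}E_4.
$$
Then the $E_4$-component of $\trace B(\cdot,A_H(\cdot))$ becomes
$$
\langle B(E_1,A_H(E_1))+B(E_2,A_H(E_2)),E_4\rangle=-f^{2}\cdot cf^{3/2}+3f^{2}\cdot(-cf^{3/2})=-4cf^{7/2}.
$$

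Finally, projecting the first equation of \eqref{biharmonic-system} on $E_4$ yields $-4cf^{7/2}=0$, which contradicts $c\neq 0$ and $f>0$. There is no genuinely hard step here; the only mild subtlety is to recognise that the $PNMC$ condition collapses $\Delta^\perp H$ onto the $E_3$-line, so that the $E_4$-part of the biharmonic equation is the short algebraic identity above, immediately providing the required contradiction.
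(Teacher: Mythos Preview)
Your proof is correct and follows essentially the same approach as the paper: both use the first equation of the biharmonic system \eqref{biharmonic-system} and exploit the fact that, by the $PNMC$ condition, $\Delta^\perp H=(\Delta f)E_3$ and $\trace B(\cdot,A_H(\cdot))=10f^3E_3-4cf^{7/2}E_4$, so the $E_4$-component forces $-4cf^{7/2}=0$, contradicting $c\neq 0$ and $f>0$. The paper writes out the full decomposition $(\Delta f+10f^3-2f)E_3-4cf^{7/2}E_4=0$ before isolating the $E_4$-part, while you project directly onto $E_4$, but the argument is the same.
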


\begin{proof}
Assume that there exists a biharmonic surface $M^2$ in $\mathbb{S}^4$ which is also $PNMC$.  From the first equation of \eqref{biharmonic-system}, by some straightforward computations, we get
$$
\left(\Delta f +10f^3-2f\right)E_3-4cf^{7/2}E_4=0.
$$
Since $c$ is a non-zero real constant, it follows that $f=0$ and this is a contradiction. 
\end{proof}

\section{An extrinsic approach}\label{sec-extrinsic}
Let $\varphi:\left(M^2,g\right)\to\mathbb{S}^4$ be a $PNMC$ biconservative immersion. Consider $i:\mathbb{S}^4\to\mathbb{E}^5$ be the canonical inclusion and denote 
$$
\Phi=i\circ \varphi:M^2\to\mathbb{E}^5.
$$
Recall that $NM^2$ represents the normal bundle of $\varphi$ and, in order to avoid any confusion, we denote by $N_\Phi M^2$ the normal bundle of the immersion $\Phi$. Clearly, the two normal bundles are related by
$$
N_\Phi M^2=i_\ast\left(NM^2\right)\oplus \Span \left\{\Phi\right\}
$$
and we have 
$$
B_\Phi(X,Y)=i_\ast(B(X,Y))-g(X,Y)\Phi, \qquad X,Y\in C\left(TM^2\right),
$$
where $B_\Phi$ denotes the second fundamental form of $\Phi$.

For any $\xi\in C\left(NM^2\right)$, we have $A_{i_\ast(\xi)}^\Phi=A_\xi$ and $A^\Phi_\Phi=-Id$.
Recall that $\left\{E_1,E_2\right\}$ is a positively oriented global orthonormal frame field in the tangent bundle $TM^2$ and $\left\{E_3,E_4\right\}$ is a positively oriented global orthonormal frame field in the normal bundle $NM^2$; thus, $\left\{E_3,E_4,\Phi\right\}$ is a positively oriented orthonormal frame field in the normal bundle $N_\Phi M^2$, where
$$
E_1=\frac{\grad f}{\left|\grad f\right|} \qquad \text{and} \qquad E_3=\frac{H}{f}.
$$
We recall that we denoted by $\tilde{\nabla}$ and $\hat{\nabla}$ the Levi-Civita connections of $\mathbb{S}^4$ and $\mathbb{E}^5$, respectively. Clearly,
$$
\hat{\nabla}_{\tilde{X}}\tilde{Y}=\tilde{\nabla}_{\tilde{X}}\tilde{Y}-\langle\tilde{X},\tilde{Y}\rangle \Phi, \qquad \tilde{X},\tilde{Y}\in C\left(T\mathbb{S}^4\right).
$$
As usual, when we work with isometric immersions, we will identify $M^2$ with its image, $X\in C\left(TM^2\right)$ with $\varphi_\ast (X)$ or with $\Phi_\ast (X)$, and the Riemannian metrics on $M^2$, $\mathbb{S}^4$, $\mathbb{E}^5$ will be simply denoted by $\langle,\rangle$. 

Our aim is to find extrinsic properties of $M^2$ and, finally, to infer a parametrization of $M^2$.

Now, using Theorem \ref{thm:fundamentalProperties}, by standard computations, we obtain 
\begin{equation}\label{LC-tilde,hat}
	\left\{
	\begin{array}{l}
		\hat{\nabla}_{E_1}E_1=\tilde{\nabla}_{E_1}E_1-\Phi=-fE_3+cf^{3/2}E_4-\Phi\\
		\hat{\nabla}_{E_2}E_1=\tilde{\nabla}_{E_2}E_1=-\frac{3}{4}\frac{E_1f}{f}E_2\\
		\hat{\nabla}_{E_1}E_2=\tilde{\nabla}_{E_1}E_2=0\\
		\hat{\nabla}_{E_2}E_2=\tilde{\nabla}_{E_2}E_2-\Phi=\frac{3}{4}\frac{E_1 f}{f}E_1+3fE_3-cf^{3/2}E_4-\Phi\\
		\hat{\nabla}_{E_1}E_3=\tilde{\nabla}_{E_1}E_3=fE_1\\
		\hat{\nabla}_{E_2}E_3=\tilde{\nabla}_{E_2}{E_3}=-3fE_2\\
		\hat{\nabla}_{E_1}E_4=\tilde{\nabla}_{E_1}E_4=-cf^{3/2}E_1\\
		\hat{\nabla}_{E_2}E_4=\tilde{\nabla}_{E_2}E_4=cf^{3/2}E_2\\
		\hat{\nabla}_{E_1}\Phi=E_1\\
		\hat{\nabla}_{E_2}\Phi=E_2
	\end{array}
	\right..
\end{equation} 

First, we will study the geometric properties of the integral curves of $E_1$, viewed as curves in $\mathbb{S}^4$. We denote such a curve by $\tilde{\gamma}=\tilde{\gamma}(u)$. When we consider this curve as a curve lying in $\mathbb{E}^5$, we set $\hat{\gamma}=i\circ\tilde{\gamma}$.

For a homogeneous notation of the vector fields that form the Frenet frame field associated to $\tilde{\gamma}$, we denote by $\tilde{V}_1$ the restriction of $E_1$ along $\gamma$. By straightforward computations we have
\begin{equation*}
\tilde{\nabla}_{\tilde{V}_1}\tilde{V}_1 = \tilde{\kappa}_1 \tilde{V}_2,
\end{equation*}
where we define
$$
\tilde{\kappa}_1=\left|\tilde{\nabla}_{\tilde{V}_1}\tilde{V}_1\right|=f\sqrt{1+c^2f}
$$
and
$$
\tilde{V}_2=\frac{1}{\tilde{\kappa}_1}\tilde{\nabla}_{\tilde{V}_1}\tilde{V}_1=\frac{1}{\sqrt{1+c^2f}}\left(-E_3+c\sqrt{f}E_4\right).
$$
Further,
\begin{equation*}
\tilde{\nabla}_{\tilde{V}_1}\tilde{V}_2 = -\tilde{\kappa}_1\tilde{V}_1+\tilde{\kappa}_2\tilde{V}_3,
\end{equation*}
where
$$
\tilde{\kappa}_2=\left|\tilde{\nabla}_{\tilde{V}_1}\tilde{V}_2+\tilde{\kappa}_1\tilde{V}_1\right|=\frac{|c|f'}{2\sqrt{f}\left(1+c^2f\right)}
$$
and
$$
\tilde{V}_3=\frac{1}{\tilde{\kappa}_2}\left(\tilde{\nabla}_{\tilde{V}_1}\tilde{V}_2+\tilde{\kappa}_1\tilde{V}_1\right)=\frac{|c|\sqrt{f}}{\sqrt{1+c^2f}}E_3+\frac{1}{\sqrt{1+c^2f}}E_4.
$$
Moreover, using \eqref{first-integral-f}, we obtain
$$
\tilde{\kappa}_2=\frac{|c|}{2\sqrt{f}\left(1+c^2f\right)}\sqrt{2C^2f^{7/2}-\frac{16}{9}f^2-16f^4-\frac{16}{9}c^2f^5}.
$$
Finally, 
\begin{equation*}
\tilde{\nabla}_{\tilde{V}_1}\tilde{V}_3 =-\tilde{\kappa}_2\tilde{V}_2= -\tilde{\kappa}_2\tilde{V}_2+\tilde{\kappa}_3\tilde{V}_4,
\end{equation*}
where  $\tilde{\kappa}_3=0$ and $\tilde{V}_4$ is the unit vector field along $\gamma$ such that $\left\{\tilde{V}_1,\tilde{V}_2,\tilde{V}_3,\tilde{V}_4\right\}$ is a positively oriented orthonormal frame field along $\gamma$.

As $\tilde{\kappa}_3=0$, it follows that $\tilde{\gamma}$ lies in a totally geodesic hypersphere $\mathbb{S}^3$ of $\mathbb{S}^4$, $\mathbb{S}^3=\mathbb{S}^4\cap \Pi$, where $\Pi$ is an hyperplane of $\mathbb{E}^5$ which contains the origin. More precisely, since 
$$
\hat{\nabla}_{\tilde{V}_1}E_2=0,
$$
i.e., $E_2$ is constant along $\hat{\gamma}$, and since $E_2$ is orthogonal to $\hat{\gamma}$ along $\gamma$, it follows that $E_2$ is normal to the hyperplane $\Pi$.

In conclusion, we have

\begin{proposition}\label{integral-curves-of-E1}
Let $\varphi:\left(M^2,g\right)\to\mathbb{S}^4$ be a $PNMC$ biconservative immersion and consider $\tilde{\gamma}=\tilde{\gamma}(u)$ an integral curve of $E_1$, viewed as a curve in $\mathbb{S}^4$. Then, the following hold:
\begin{itemize}
\item [(i)]$E_2$ is constant along $\hat{\gamma}$, where $\hat{\gamma}=i\circ\tilde{\gamma}$;
\item [(ii)] $\tilde{\gamma}$ lies in a totally geodesic hypersphere $\mathbb{S}^3=\mathbb{S}^4\cap \Pi$, where the hyperplane $\Pi$ contains the origin and is orthogonal to $E_2$;
\item[(iii)] the curvature and the torsion of $\tilde{\gamma}$ are
\begin{equation}\label{curvature-tilde-gammma}
\mathrm{k}(u)=f(u)\sqrt{1+c^2f(u)}
\end{equation} 
and
\begin{equation}\label{torsion-tilde-gamma}
\tau(u)=\frac{|c|}{2\sqrt{f(u)}\left(1+c^2f(u)\right)}\sqrt{2C^2f^{7/2}(u)-\frac{16}{9}f^2(u)-16f^4(u)-\frac{16}{9}c^2f^5(u)},
\end{equation}
where $f$ is the mean curvature function of the immersion $\varphi$.
\end{itemize}
\end{proposition}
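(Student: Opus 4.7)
The plan is to organize the Frenet analysis of $\tilde{\gamma}$ that the preceding computations already essentially carried out, and then to deduce the geometric consequences (i) and (ii) from the vanishing of the third curvature together with the formula $\hat{\nabla}_{E_1}E_2=0$ in \eqref{LC-tilde,hat}.

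First I would set $\tilde{V}_1=E_1\big|_{\tilde{\gamma}}$, which is unit along $\tilde{\gamma}$ since $|E_1|=1$, so $\tilde{\gamma}$ is already parametrized by arc-length in $\mathbb{S}^4$. Applying \eqref{LC-tilde,hat}, I compute $\tilde{\nabla}_{\tilde{V}_1}\tilde{V}_1=-fE_3+cf^{3/2}E_4$, whose norm yields $\tilde{\kappa}_1=f\sqrt{1+c^2f}$ and whose normalization defines $\tilde{V}_2$. This gives the curvature formula \eqref{curvature-tilde-gammma}. Then I compute $\tilde{\nabla}_{\tilde{V}_1}\tilde{V}_2+\tilde{\kappa}_1\tilde{V}_1$ using \eqref{LC-tilde,hat} again (together with $E_1f=f'$); its norm gives $\tilde{\kappa}_2=|c|f'/(2\sqrt{f}(1+c^2f))$, and I substitute $(f')^2$ from the first integral \eqref{first-integral-f} to obtain the displayed expression \eqref{torsion-tilde-gamma}. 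Normalizing produces $\tilde{V}_3$. Finally, differentiating $\tilde{V}_3$ and using \eqref{LC-tilde,hat} gives $\tilde{\nabla}_{\tilde{V}_1}\tilde{V}_3=-\tilde{\kappa}_2\tilde{V}_2$, so the third curvature $\tilde{\kappa}_3$ vanishes. This establishes (iii).

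For (i), I would observe directly from \eqref{LC-tilde,hat} that $\hat{\nabla}_{E_1}E_2=0$, i.e., $E_2$ is parallel along $\hat{\gamma}$ in the flat connection of $\mathbb{E}^5$; hence $E_2$ is a fixed vector in $\mathbb{E}^5$ along $\hat{\gamma}$.

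For (ii), the vanishing $\tilde{\kappa}_3=0$ from step one, together with the standard fact that a curve in $\mathbb{S}^4$ whose third Frenet curvature vanishes identically lies in a totally geodesic $\mathbb{S}^3$, forces $\tilde{\gamma}\subset\mathbb{S}^3=\mathbb{S}^4\cap\Pi$ for some linear hyperplane $\Pi\subset\mathbb{E}^5$. To identify $\Pi$, I would use that the osculating space $\Span\{\tilde{V}_1,\tilde{V}_2,\tilde{V}_3,\hat{\gamma}\}$ is parallel along $\hat{\gamma}$ in $\mathbb{E}^5$ (equivalently, $\Span\{E_1,E_3,E_4,\Phi\}_{|\hat\gamma}$ is parallel), while $E_2$ is orthogonal to each of $E_1, E_3, E_4, \Phi$ and, by (i), is a constant vector in $\mathbb{E}^5$ along $\hat{\gamma}$. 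So $E_2$ is a fixed unit vector orthogonal to the 4-dimensional linear span of the osculating space translated to pass through the origin. That linear span is precisely $\Pi$, and it contains the origin since the position vector $\hat{\gamma}$ lies in it. Therefore $\Pi$ is the hyperplane through the origin orthogonal to $E_2$.

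No step here is genuinely hard: the curvature/torsion computations are mechanical given \eqref{LC-tilde,hat} and \eqref{first-integral-f}, and the geometric conclusions (i)--(ii) follow from the identity $\hat{\nabla}_{E_1}E_2=0$ and the standard classification of curves with vanishing top Frenet curvature. The only mildly delicate point is justifying that the hyperplane orthogonal to $E_2$ passes through the origin, which I would handle by noting that $\langle\hat{\gamma},E_2\rangle$ is constant along $\hat{\gamma}$ (both $\hat{\gamma}$ and $E_2$ evolve in ways that keep their inner product constant) and equal to zero since $E_2\perp\Phi$ pointwise in $\mathbb{E}^5$.
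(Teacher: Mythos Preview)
Your proposal is correct and follows essentially the same approach as the paper: both carry out the Frenet computation for $\tilde{\gamma}$ using \eqref{LC-tilde,hat} to obtain $\tilde{\kappa}_1$, $\tilde{\kappa}_2$ (with the substitution from \eqref{first-integral-f}) and $\tilde{\kappa}_3=0$, and then use $\hat{\nabla}_{E_1}E_2=0$ together with $E_2\perp\hat{\gamma}$ to identify the hyperplane $\Pi$. The paper's justification of (ii) is marginally more terse---it simply notes that $E_2$ is constant and orthogonal to $\hat{\gamma}$, hence normal to $\Pi$---but your osculating-space argument is equivalent.
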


Second, we will study the geometric properties of the integral curves of $E_2$, viewed as curves in $\mathbb{E}^5$.

Using the expression of $\hat{\nabla}_{E_2}E_2$ from \eqref{LC-tilde,hat} and \eqref{first-integral-f}, it is easy to see that
$$
\left|\hat{\nabla}_{E_2}E_2\right|^2=\frac{9C^2}{8}f^{3/2}.
$$
Denote by
\begin{equation}\label{curvature-integral-curve-E2}
\hat{\kappa}=\left|\hat{\nabla}_{E_2}E_2\right|=\frac{3|C|}{2\sqrt{2}}f^{3/4}
\end{equation}
and by 
\begin{equation}\label{xi}
\xi=\frac{1}{\hat{\kappa}}\hat{\nabla}_{E_2}E_2.
\end{equation}
Clearly, $E_2\hat{\kappa}=0$ and, using again \eqref{first-integral-f} and \eqref{LC-tilde,hat}, by some straightforward computations, we get
$$
\hat{\nabla}_{E_2}\xi=-\hat{\kappa}E_2, \qquad \frac{E_1\hat{\kappa}}{\hat{\kappa}}=\frac{3}{4}\frac{E_1f}{f}, \qquad \hat{\nabla}_{E_1}\xi=0.
$$
In conclusion, we have
\begin{proposition}\label{integral-curves-of-E2}
Let $\varphi:\left(M^2,g\right)\to\mathbb{S}^4$ be a $PNMC$ biconservative immersion and consider $\hat{\kappa}$ and $\xi$ given in \eqref{curvature-integral-curve-E2} and \eqref{xi}. Then, the following hold:
\begin{itemize}
\item [(i)] the integral curves of $E_2$ are circles in $\mathbb{E}^5$ with constant curvature $\hat{\kappa}$, i.e., $E_2\hat{\kappa}=0$ and
$$
\hat{\nabla}_{E_2}E_2=\hat{\kappa}\xi, \qquad \hat{\nabla}_{E_2}\xi=-\hat{\kappa}E_2;
$$
\item [(ii)] 
$$
\frac{E_1\hat{\kappa}}{\hat{\kappa}}=\frac{3}{4}\frac{E_1f}{f};
$$
\item [(iii)]
$$ 
\hat{\nabla}_{E_1}\xi=0.
$$
\end{itemize}
\end{proposition}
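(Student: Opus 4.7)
The plan is to verify the three items by direct computation from the structure equations \eqref{LC-tilde,hat}, exploiting $E_2f=0$ and the first integral \eqref{first-integral-f}. The key reduction is that the summands in the expression of $\hat\nabla_{E_2}E_2=\tfrac{3}{4}\tfrac{E_1f}{f}E_1+3fE_3-cf^{3/2}E_4-\Phi$ are along pairwise orthogonal unit directions, so
$$
\bigl|\hat\nabla_{E_2}E_2\bigr|^2=\frac{9}{16}\frac{(E_1f)^2}{f^2}+9f^2+c^2f^3+1.
$$
Substituting $(E_1f)^2=(f')^2$ from \eqref{first-integral-f} makes the terms in $f^2$, $f^3$ and the constant cancel exactly against the remaining contributions, leaving $\frac{9C^2}{8}f^{3/2}$. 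This is precisely the formula \eqref{curvature-integral-curve-E2} defining $\hat\kappa$, and since $\hat\kappa$ depends only on $f$ while $E_2f=0$, the identity $E_2\hat\kappa=0$ as well as (ii) follow at once by logarithmic differentiation.

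For the remainder of (i), the equation $\hat\nabla_{E_2}E_2=\hat\kappa\,\xi$ holds by the definition \eqref{xi} of $\xi$. Because $E_2\hat\kappa=0$, I only need to show $\hat\nabla_{E_2}\hat\nabla_{E_2}E_2=-\hat\kappa^2E_2$. I would apply $\hat\nabla_{E_2}$ term by term to $\hat\nabla_{E_2}E_2$ and insert the formulas for $\hat\nabla_{E_2}E_1,\hat\nabla_{E_2}E_3,\hat\nabla_{E_2}E_4,\hat\nabla_{E_2}\Phi$ from \eqref{LC-tilde,hat}; every resulting contribution is a multiple of $E_2$, and the scalar coefficient assembles to $-\frac{9}{16}(E_1f)^2/f^2-9f^2-c^2f^3-1=-|\hat\nabla_{E_2}E_2|^2=-\hat\kappa^2$, which is exactly what is needed.

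For (iii), the natural strategy is to compute $\hat\nabla_{E_1}(\hat\kappa\,\xi)$ and split off $(E_1\hat\kappa)\xi$. Differentiating the explicit expression of $\hat\nabla_{E_2}E_2$ along $E_1$ via \eqref{LC-tilde,hat} produces terms in $E_1,E_3,E_4,\Phi$; the second-order piece $E_1(E_1f)$ that appears from differentiating $\frac{3}{4}\frac{E_1f}{f}E_1$ must be eliminated using Theorem \ref{thm:fundamentalProperties}(iv) (equivalently \eqref{second-order-chart-f}, which gives $E_1E_1f$ in terms of $(E_1f)^2/f$ and polynomial terms in $f$). After this substitution, the combination should collapse into $(E_1\hat\kappa)\xi$, forcing $\hat\nabla_{E_1}\xi=0$. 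The main obstacle is the bookkeeping: several derivatives of $f$ and of the orthogonal-frame vectors appear simultaneously, and the cancellation hinges on using the second-order identity and (ii) in the right places; once these substitutions are organized systematically, each of the four component equations (along $E_1$, $E_3$, $E_4$, $\Phi$) reduces to an algebraic identity in $f$ and its first derivative.
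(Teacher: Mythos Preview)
Your proposal is correct and follows essentially the same route as the paper: the paper computes $|\hat\nabla_{E_2}E_2|^2$ from \eqref{LC-tilde,hat} and \eqref{first-integral-f}, then states that the three identities $\hat\nabla_{E_2}\xi=-\hat\kappa E_2$, $E_1\hat\kappa/\hat\kappa=\tfrac{3}{4}E_1f/f$, and $\hat\nabla_{E_1}\xi=0$ follow ``by some straightforward computations'' using again \eqref{first-integral-f} and \eqref{LC-tilde,hat}. Your plan makes these computations explicit; the only point worth noting is that in the step $\hat\nabla_{E_2}\hat\nabla_{E_2}E_2=-\hat\kappa^2E_2$ you are tacitly using $E_2(E_1f)=0$ (established in the proof of Theorem~\ref{thm:fundamentalProperties}) to conclude that the coefficient $\tfrac{3}{4}\tfrac{E_1f}{f}$ is constant along $E_2$.
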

Now, we are ready to find the local parametrization of $M^2$ in $\mathbb{E}^5$. This parametrization will rely on a solution $f$ of a second order $ODE$ and on a certain curve in $\mathbb{S}^3$, uniquely determined by $f$ and the condition that its position vector has to make a specific angle with a constant direction.

For the beginning, let us consider an integral curve $\nu$ of $E_2$ parametrized by arc-length. As previously, when we will view this curve in $\mathbb{E}^5$ it will be denoted by $\hat{\nu}$. From Proposition \ref{integral-curves-of-E2}, we know that $\hat{\nu}$ is a circle in $\mathbb{E}^5$ with curvature $\hat{\kappa}$, thus it can be parametrized by
\begin{equation}\label{eq-circle}
\hat{\nu}(v)=C_0+\cos\left(\hat{\kappa}v\right)C_1+\sin\left(\hat{\kappa}v\right)C_2,
\end{equation}
where $C_0$, $C_1$, $C_2\in\mathbb{E}^5$ with $\left|C_1\right|=\left|C_2\right|=1/\hat{\kappa}$ and $\langle C_1,C_2\rangle=0$.

Then, let $p_0\in M^2$ be an arbitrarily fixed point of $M^2$ and $\hat{\gamma}=\hat{\gamma}(u)$ be an integral curve of $E_1$ with $\hat{\gamma}(0)=p_0$. Consider $\left\{\phi_v\right\}_{v\in\mathbb{R}}$ the flow of $E_2$ near the point $p_0$. Then, for any $u\in(-\varepsilon,\varepsilon)$ and for any $v\in\mathbb{R}$, the parametrization $\Phi=\Phi(u,v)$ of $M^2$ is given by
\begin{equation*}\label{Phi-1}
\Phi(u,v)=\phi_{\hat{\gamma}(u)}(v)=C_0(u)+\cos\left(\hat{\kappa}(u)v\right)C_1(u)+\sin\left(\hat{\kappa}(u)v\right)C_2(u),
\end{equation*}
where the vectorial functions $C_0(u)$, $C_1(u)$, $C_2(u)$, which are uniquely determined by the surface, satisfy
\begin{equation}\label{conditions-1}
\hat{\gamma}(u)=C_0(u)+C_1(u), \qquad \left|C_1 (u)\right|=\left|C_2(u)\right|=\frac{1}{\hat{\kappa}(u)}, \qquad  \langle C_1(u),C_2(u)\rangle=0.
\end{equation}
In order to get a simpler expression of $\Phi$ we consider the following change of coordinates $(u,v)\to \left(u, t=\hat{\kappa}(u)v\right)$. With respect to these new local coordinates, the parametrization $\Phi$ can be expressed as
\begin{equation*}\label{Phi-2}
\Phi(u,t)=C_0(u)+\frac{1}{\hat{\kappa}(u)}\left(\cos (t)  c_1(u)+\sin (t)c_2(u)\right),
\end{equation*}
where we define
\begin{equation}\label{link-ci-Ci}
c_i(u)=\hat{\kappa}(u)C_i(u), \qquad i=1,2.
\end{equation}
Using the above relation for $i=1$ and \eqref{conditions-1}, it is clear that 
$$
C_0(u)=\hat{\gamma}(u)-\frac{1}{\hat{\kappa}(u)}c_1(u).
$$
So, 
\begin{equation}\label{Phi-3}
\Phi(u,t)=\hat{\gamma}(u)+\frac{1}{\hat{\kappa}(u)}\left(\left(\cos (t)-1\right)c_1(u)+\sin (t) c_2(u)\right).
\end{equation}
Further, as $\hat{\nu}$ is an integral curve of $E_2$, from \eqref{eq-circle}, \eqref{link-ci-Ci}, it follows that 
$$
c_2(u)=E_2(u,0)
$$
and 
$$
c_1(u)=-\xi(u,0),
$$
where $E_2(u,0)=E_2\left(\nu(0)\right)$ and $\xi$ is given in \eqref{xi} with
$$
\xi(u,0)=\frac{1}{\hat{\kappa}(u)}\hat{\nabla}_{E_2(u,0)}E_2.
$$
Moreover, we will prove that, $c_1(u)$ and $c_2(u)$ are, in fact, constant vectors. Indeed, taking into account the third item from Proposition \ref{integral-curves-of-E2}, we get
$$
\hat{\nabla}_{\hat{\gamma}'}c_1=0,
$$
so $c_1$ is a constant vector in $\mathbb{E}^5$, and since $\hat{\nabla}_{E_1}E_2=0$, it follows that 
$$
\hat{\nabla}_{\hat{\gamma}'}c_2=0,
$$
so $c_2$ is a constant vector in $\mathbb{E}^5$. Therefore, \eqref{Phi-3} turns into

\begin{equation}\label{Phi-4}
\Phi(u,t)=\hat{\gamma}(u)+\frac{1}{\hat{\kappa}(u)}\left(\left(\cos(t)-1\right)c_1+\sin(t)c_2\right),	
\end{equation}
where $c_1$ and $c_2$ are two constant orthonormal vectors in $\mathbb{E}^5$ and $\hat{\gamma}$ is an integral curve of $E_1$ with the properties given in Proposition \ref{integral-curves-of-E1}. In particular, $\hat{\gamma}$ lies in a totally geodesic hypersphere $\mathbb{S}^3=\mathbb{S}^4\cap \Pi$, where the hyperplane $\Pi$ contains the origin and is orthogonal to $c_2$.

Finally, from $\left|\Phi(u,t)\right|=1$, we get
$$
\langle\hat{\gamma}(u),c_1\rangle=\frac{1}{\hat{\kappa}(u)}.
$$
\begin{remark}
We note that, up to a multiplicative constant, the parameter $t$ in formula \eqref{Phi-4} coincides with the parameter $t$ in Proposition \ref{other-parameters}.
\end{remark}
In conclusion we can state

\begin{theorem}\label{theorem-parametrization}
Let $\varphi:\left(M^2,g\right)\to\mathbb{S}^4$ be a $PNMC$ biconservative immersion and denote $\Phi=i\circ\varphi:M^2\to\mathbb{E}^5$, where $i:\mathbb{S}^4\to\mathbb{E}^5$ is the canonical inclusion. We identify $M^2$ with its image, and then, $M^2$ can be locally parametrized by
$$
\Phi(u,t)=\hat{\gamma}(u)+\frac{1}{\hat{\kappa}(u)}\left(\left(\cos(t)-1\right)c_1+\sin(t)c_2\right),	
$$ 
where
\begin{itemize}
\item[(i)] 
$$
\hat{\kappa}(u)=\frac{3|C|}{2\sqrt{2}}f^{3/4}(u),
$$
where $f=f(u)$ is a positive solution of the second order ODE \eqref{second-order-chart-f}, with $f'>0$, and whose first integral is \eqref{first-integral-f}. The non-zero constant $C$ is given in \eqref{first-integral-f}; 
\item[(ii)] $c_1$ and $c_2$ are constant orthonormal vectors in $\mathbb{E}^5$;
\item[(iii)] $\hat{\gamma}=\hat{\gamma}(u)$ is a curve in $\mathbb{E}^5$ such that $\hat{\gamma}=i\circ \tilde{\gamma}$, where $\tilde{\gamma}$ is a curve parametrized by arc-length which lies in a totally geodesic hypersphere $\mathbb{S}^3=\mathbb{S}^4\cap \Pi$;  the hyperplane $\Pi$ contains the origin and is orthogonal to $c_2$. Moreover, the curvature and torsion of $\tilde{\gamma}$, as a curve in $\mathbb{S}^3$, are given by \eqref{curvature-tilde-gammma} and \eqref{torsion-tilde-gamma}, respectively, and the curve $\hat{\gamma}$ must satisfy
\begin{equation}\label{eq3}
\langle\hat{\gamma}(u),c_1\rangle=\frac{1}{\hat{\kappa}(u)}.
\end{equation}
\end{itemize}
\end{theorem}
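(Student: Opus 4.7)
The plan is to construct the parametrization by flowing along $E_1$ and $E_2$ in two steps, and then to identify the intrinsic geometric quantities that appear as constants in $\mathbb{E}^5$.

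First, I would fix a point $p_0\in M^2$ and take $\hat{\gamma}=\hat{\gamma}(u)$ to be the integral curve of $E_1$ through $p_0$, together with the local flow $\{\phi_v\}$ of $E_2$ near $p_0$. By Proposition \ref{integral-curves-of-E2}(i), for each $u$ the integral curve of $E_2$ starting at $\hat{\gamma}(u)$ is a circle of $\mathbb{E}^5$ with constant curvature $\hat{\kappa}(u)$, so it admits the representation
$$
\Phi(u,v)=\phi_v\bigl(\hat{\gamma}(u)\bigr)=C_0(u)+\cos\bigl(\hat{\kappa}(u)v\bigr)C_1(u)+\sin\bigl(\hat{\kappa}(u)v\bigr)C_2(u),
$$
with $|C_1(u)|=|C_2(u)|=1/\hat{\kappa}(u)$, $\langle C_1(u),C_2(u)\rangle=0$, and $\hat{\gamma}(u)=C_0(u)+C_1(u)$. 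I would then reparametrize via $t=\hat{\kappa}(u)v$ and set $c_i(u)=\hat{\kappa}(u)C_i(u)$ for $i=1,2$; the initial data for the circle yield $c_2(u)=E_2(\hat{\gamma}(u))$ and $c_1(u)=-\xi(\hat{\gamma}(u))$, where $\xi$ is the unit vector defined in \eqref{xi}. Solving for $C_0(u)$ and substituting produces the candidate formula
$$
\Phi(u,t)=\hat{\gamma}(u)+\frac{1}{\hat{\kappa}(u)}\bigl((\cos(t)-1)c_1(u)+\sin(t)c_2(u)\bigr).
$$

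The heart of the argument is to show that $c_1$ and $c_2$ are in fact \emph{constant} vectors in $\mathbb{E}^5$. For $c_2$ this is immediate from $\hat{\nabla}_{E_1}E_2=0$ in \eqref{LC-tilde,hat}. For $c_1$ the key input is Proposition \ref{integral-curves-of-E2}(iii), namely $\hat{\nabla}_{E_1}\xi=0$, so $\xi$ and hence $c_1=-\xi$ is parallel along $\hat{\gamma}$. With $c_1, c_2$ identified as fixed orthonormal vectors in $\mathbb{E}^5$, the constraint $|\Phi(u,t)|^2=1$ (coming from $\Phi(M^2)\subset\mathbb{S}^4$), together with $\hat{\gamma}\perp c_2$ from Proposition \ref{integral-curves-of-E1}(ii) and $\langle c_1,c_2\rangle=0$, collapses to the scalar relation $\langle\hat{\gamma}(u),c_1\rangle=1/\hat{\kappa}(u)$. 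The remaining items $(i)$--$(iii)$ are then assembled by quoting \eqref{curvature-integral-curve-E2} for the closed form of $\hat{\kappa}$, the ODE \eqref{second-order-chart-f} with first integral \eqref{first-integral-f} for $f$, and Proposition \ref{integral-curves-of-E1} for the geometric description of $\hat{\gamma}$ (the ambient hyperplane $\Pi$ normal to $c_2$, and the formulas for its curvature and torsion in $\mathbb{S}^3$).

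The main obstacle is the constancy of $c_1$: it is not a priori evident that the centers of the one-parameter family of circles comprising $M^2$ are aligned along a common direction in $\mathbb{E}^5$. This rigidity is precisely what the $PNMC$ biconservative hypothesis buys us via Proposition \ref{integral-curves-of-E2}(iii); once it is in hand, the remaining verifications (matching $\phi_v\circ\hat{\gamma}$ with the displayed $\Phi$, computing $\hat{\kappa}$ in closed form, and deriving the scalar constraint on $\hat{\gamma}$) are routine.
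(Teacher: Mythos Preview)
Your proposal is correct and follows essentially the same route as the paper: the paper also builds $\Phi(u,v)=\phi_v(\hat\gamma(u))$ from the $E_2$-flow over an integral curve of $E_1$, writes each $E_2$-orbit as a Euclidean circle, reparametrizes by $t=\hat\kappa(u)v$, identifies $c_2=E_2$ and $c_1=-\xi$ along $\hat\gamma$, and then uses $\hat\nabla_{E_1}E_2=0$ together with Proposition~\ref{integral-curves-of-E2}(iii) to conclude that $c_1,c_2$ are constant, with the scalar constraint~\eqref{eq3} extracted from $|\Phi|=1$. Your identification of the constancy of $c_1$ as the crux, resting on $\hat\nabla_{E_1}\xi=0$, matches the paper's emphasis exactly.
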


\begin{remark}
Theorems \ref{thm:intrinsicCharacterization}, \ref{thm-existence} and Proposition \ref{other-parameters} prove the existence of the $PNMC$ biconservative surfaces in $\mathbb{S}^4$, and therefore the existence of the curves $\hat{\gamma}$ that satisfy the hypotheses of the above theorem is ensured.
\end{remark}

\begin{remark}
If we consider a surface $M^2$ in $\mathbb{S}^4$ parametrized as in Theorem \ref{theorem-parametrization} then, by a straightforward computation, we can prove that it is a $PNMC$ biconservative surface with $f>0$, $\grad f\neq 0$ at any point and $M^2$ is completely contained in $\mathbb{S}^4$.
\end{remark}

Now, we can ask whether all three conditions which determine the curve $\tilde{\gamma}$, i.e.,  \eqref{curvature-tilde-gammma}, \eqref{torsion-tilde-gamma} and \eqref{eq3} are independent. The answer is negative and, more precisely, we have:

\begin{proposition}
Let $\tilde{\gamma}$  be a curve which satisfies \eqref{curvature-tilde-gammma} and \eqref{eq3}. Then, its torsion is given by \eqref{torsion-tilde-gamma}.
\end{proposition}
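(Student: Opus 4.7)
The plan is to decompose the constant unit vector $c_1$ in the orthonormal frame $\{\hat{\gamma},\tilde{V}_1,\tilde{V}_2,\tilde{V}_3,\tilde{V}_4\}$ of $\mathbb{E}^5$ along $\tilde{\gamma}$, and then to extract $\tilde{\kappa}_2$ from the single requirement $\hat{\nabla}_{\tilde{V}_1}c_1=0$. Since $\tilde{\gamma}$ lies in the totally geodesic $\mathbb{S}^3=\mathbb{S}^4\cap\Pi$, we have $\tilde{\kappa}_3=0$, so $\tilde{V}_4$ is the constant unit vector perpendicular to $\Pi$; as $c_1\in\Pi$, this forces $\langle c_1,\tilde{V}_4\rangle=0$. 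Writing $c_1=a_0\hat{\gamma}+a_1\tilde{V}_1+a_2\tilde{V}_2+a_3\tilde{V}_3$, the hypothesis \eqref{eq3} gives $a_0=1/\hat{\kappa}$.

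Using $\hat{\nabla}_{\tilde{V}_1}\hat{\gamma}=\tilde{V}_1$, the Gauss relation $\hat{\nabla}_{\tilde{V}_1}\tilde{V}_1=\tilde{\kappa}_1\tilde{V}_2-\hat{\gamma}$, and the Frenet formulas for $\tilde{V}_2,\tilde{V}_3$ (with $\tilde{\kappa}_3=0$), the vanishing of $\hat{\nabla}_{\tilde{V}_1}c_1$ splits into the scalar system
\begin{equation*}
a_1=a_0',\quad \tilde{\kappa}_1a_2=a_0+a_1',\quad \tilde{\kappa}_2a_3=\tilde{\kappa}_1a_1+a_2',\quad a_3'=-\tilde{\kappa}_2a_2.
\end{equation*}
The first two relations determine $a_1$ and $a_2$ explicitly from $\hat{\kappa}$ and $\tilde{\kappa}_1$ alone, independently of $\tilde{\kappa}_2$. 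Substituting $\hat{\kappa}=\frac{3|C|}{2\sqrt{2}}f^{3/4}$ (so that $\hat{\kappa}'/\hat{\kappa}=3f'/(4f)$) and $\tilde{\kappa}_1=f\sqrt{1+c^2f}$, and eliminating $f''$ from $a_1'$ via \eqref{second-order-chart-f}, the third relation collapses after a decisive algebraic cancellation to
\begin{equation*}
\tilde{\kappa}_2a_3=-\frac{c^2ff'}{\hat{\kappa}\,(1+c^2f)^{3/2}}.
\end{equation*}

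The final step combines this with the normalization $|c_1|^2=a_0^2+a_1^2+a_2^2+a_3^2=1$, which expresses $a_3^2$ in closed form as a rational function of $f$. Then $\tilde{\kappa}_2^2=(\tilde{\kappa}_2 a_3)^2/a_3^2$ becomes a rational function of $f$ and $(f')^2$; replacing $(f')^2$ by the polynomial expression furnished by the first integral \eqref{first-integral-f} reproduces exactly the right-hand side of \eqref{torsion-tilde-gamma}. The main obstacle is the algebraic cancellation that collapses $\tilde{\kappa}_1a_1+a_2'$ into the single term proportional to $c^2ff'$: it hinges on applying \eqref{second-order-chart-f} at the right moment to kill the $f''$ entering $a_1'$, and on the non-obvious identity $-3(1+c^2f)^2+(3+2c^2f+3c^4f^2)=-4c^2f$ emerging from the common-denominator combination. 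Once that cancellation is in hand, the remaining computation is mechanical.
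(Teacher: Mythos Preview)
Your proposal is correct and follows essentially the same route as the paper. The paper fixes coordinates so that $c_1=e_1$ and computes the first components $\gamma^1,V_1^1,V_2^1,V_3^1$, which are precisely your $a_0,a_1,a_2,a_3$; it then uses the same normalization $\sum (V_i^1)^2+(\gamma^1)^2=1$ to find $(V_3^1)^2=a_3^2$. The only minor variation is at the final extraction step: the paper differentiates $a_3$ and uses the fourth relation $a_3'=-\tau a_2$ to read off $\tau$, whereas you use the third relation $\tilde{\kappa}_2 a_3=\tilde{\kappa}_1 a_1+a_2'$ and then divide $(\tilde{\kappa}_2 a_3)^2$ by $a_3^2$; both yield the same $\tau$, and your route has the small advantage of avoiding the sign choice for $a_3$.
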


\begin{proof}
We can assume that $c_1=e_1$ and $c_2=e_2$, where $\left\{e_1,e_2,e_3,e_4,e_5\right\}$ is the canonical basis of $\mathbb{E}^5$.

Let $\hat{\gamma}=i\circ\tilde{\gamma}$ be a curve parametrized by arc-length. From \eqref{eq3}, $\hat{\gamma}$ can be written as
$$
\hat{\gamma}=\left(\gamma^1,0,\gamma^2,\gamma^3,\gamma^4\right)\equiv \left(\gamma^1,\gamma^2,\gamma^3,\gamma^4\right),
$$
where 
$$
\gamma^1=\frac{1}{\hat{\kappa}(u)}.
$$
With the above notations, we write down the Frenet formulas
\begin{equation*}
\left\{
\begin{array}{l}
\tilde{\nabla}_{\tilde{V}_1}\tilde{V}_1=\mathrm{k}\tilde{V}_2 \\
\tilde{\nabla}_{\tilde{V}_1}\tilde{V}_2=-\mathrm{k}\tilde{V}_1+\tau\tilde{V}_3 \\
\tilde{\nabla}_{\tilde{V}_1}\tilde{V}_3=-\tau\tilde{V}_2
\end{array}
\right.,
\end{equation*}
where 
\begin{equation*}
\tilde{V}_1=\tilde{\gamma}', \qquad \mathrm{k}=\left|\tilde{\nabla}_{\tilde{V}_1}\tilde{V}_1\right|, \qquad \tau=\left|\tilde{\nabla}_{\tilde{V}_1}\tilde{V}_2+\mathrm{k}\tilde{V}_1\right|.
\end{equation*}
Using the second fundamental form of $\mathbb{S}^4$ in $\mathbb{E}^5$, or of $\mathbb{S}^3$ in $\mathbb{E}^4$, we obtain
\begin{equation*}
\left\{
\begin{array}{l}
\tilde{\nabla}_{\tilde{V}_1}\tilde{V}_1=\hat{\gamma}''+\hat{\gamma}\\
\tilde{\nabla}_{\tilde{V}_1}\tilde{V}_2=\hat{V}'_2 \\
\tilde{\nabla}_{\tilde{V}_1}\tilde{V}_3=\hat{V}'_3
\end{array}
\right..
\end{equation*} 
Therefore, we get
\begin{equation}\label{hatV2}
\hat{V}_2=\frac{1}{\mathrm{k}}\tilde{\nabla}_{\tilde{V}_1}\tilde{V}_1=\frac{1}{\mathrm{k}}\left(\hat{\gamma}''+\hat{\gamma}\right)
\end{equation}
and
\begin{equation*}
\left\{
\begin{array}{l}
\hat{V}'_1=\mathrm{k}\hat{V}_2-\hat{\gamma}\\
\hat{V}'_2=-\mathrm{k}\hat{V}_1+\tau \hat{V}_3 \\
\hat{V}'_3=-\tau \hat{V}_2
\end{array}
\right..
\end{equation*} 
Now, we can write each vector $\hat{V}_i$ as
$$
\hat{V}_i= \left(V^1_i,0,V^2_i,V^3_i,V^4_i\right)\equiv \left(V^1_i,V^2_i,V^3_i,V^4_i\right), \qquad i=1,2,3.
$$
Looking at the first component of \eqref{hatV2} and using \eqref{second-order-chart-f}, we obtain 
$$
V_2^1= \frac{1}{\mathrm{k}}\left(\left(\gamma^1\right)''+\gamma^1\right)=\frac{2\sqrt{2}\left(3+c^2f\right)f^{1/4}}{3|C|\sqrt{1+c^2f}}>0.
$$
Since at any point of $\tilde{\gamma}$ the vectors $\left\{\hat{V}_1,\hat{V}_2,\hat{V}_3, \hat{\gamma}\right\}$ form an orthonormal basis of the tangent space to $\mathbb{E}^4$, we have
$$
\left(V^1_1\right)^2+\left(V^1_2\right)^2+\left(V^1_3\right)^2+\left(\gamma^1\right)^2=1.
$$
Thus, by some straightforward computations, using  \eqref{first-integral-f}, we get
$$
\left(V^1_3\right)^2=\frac{32c^2f^{3/2}}{9C^2\left(1+c^2f\right)}.
$$
Next, since $\left(V^1_3\right)'=-\tau V_2^1$ and $\tau V_2^1$ is positive, we get
$$
V^1_3=-\frac{4\sqrt{2}|c|f^{3/4}}{3|C|\sqrt{1+c^2f}}.
$$ 
and
$$
\tau=\frac{|c|f'}{2\sqrt{f}\left(1+c^2f\right)}.
$$
Finally, from \eqref{first-integral-f}, it follows that the torsion of $\tilde{\gamma}$ is given by \eqref{torsion-tilde-gamma}.
\end{proof}

\end{document}